\newcommand{\eps}[0]{\varepsilon}
\def\cyr{\fontencoding{OT2}\fontfamily{wncyr}\selectfont}
\DeclareTextFontCommand{\textcyr}{\cyr}
\swapnumbers \numberwithin{equation}{section}
\theoremstyle{plain}
\newtheorem{theorem}[equation]{Theorem}
\newtheorem{proposition}[equation]{Proposition}
\newtheorem{corollary}[equation]{Corollary}
\newtheorem{lemma}[equation]{Lemma}
\theoremstyle{definition}
\newtheorem{definition}[equation]{Definition}
\theoremstyle{remark}
\newtheorem{remark}[equation]{Remark}
\begin{document}

\title{The $A_p$--$A_\infty$ inequality for general Calder\'on--Zygmund operators}

\author[T.~P.\ Hyt\"onen]{Tuomas P.\ Hyt\"onen}
\address{Department of Mathematics and Statistics, P.O.B.~68 (Gustaf H\"all\-str\"omin katu~2b), FI-00014 University of Helsinki, Finland}
\email{tuomas.hytonen@helsinki.fi}

\author[M.~T.\ Lacey]{Michael T.\ Lacey}
\address{School of Mathematic, Georgia Institute of Technology, Atlanta GA 30332, U.S.A.}
\email{lacey@math.gatech.edu}

\date{\today}

\subjclass[2010]{42B25, 42B35}


\maketitle

\begin{abstract}
Let $T$ be an arbitrary $L^2$ bounded Calder\'on--Zygmund operator, and $T_{\natural}$ its maximal truncated version. 
Then it satisfies the following bound for all $p\in(1,\infty)$ and all $w\in A_p$:
\begin{equation*}
  \|T_{\natural}f\|_{L^p(w)}
  \leq C_{T,p}[w]_{A_p}^{1/p}\big([w]_{A_\infty}^{1/p'}+[w^{1-p'}]_{A_\infty}^{1/p}\big)\|f\|_{L^p(w)}.
\end{equation*}
\end{abstract}

\section{Introduction}

For a weight $w$ with a non-negative density a.e., denote
\begin{equation*}
\begin{split}
  A_p(w,Q): &=\frac{w(Q)}{|Q|}\Big(\frac{w^{1-p'}(Q)}{|Q|}\Big)^{p-1},\qquad p\in(1,\infty),\\
  A_\infty(w,Q) &:=\frac{1}{w(Q)}\int_Q M(w1_Q), \\
  [w]_{A_p} &:=\sup_Q A_p(w,Q),\qquad p\in(1,\infty].
\end{split}
\end{equation*}

We prove the following theorem:

\begin{theorem}\label{thm:ApAinfty}
Let $T$ be an arbitrary $L^2$ bounded Calder\'on--Zygmund operator, and $T_{\natural}$ its maximal truncated version. Then it satisfies the following bound for all $p\in(1,\infty)$ and all $w\in A_p$:
\begin{equation*}
  \|T_{\natural}f\|_{L^p(w)}
  \leq C_{T,p}[w]_{A_p}^{1/p}\big([w]_{A_\infty}^{1/p'}+[w^{1-p'}]_{A_\infty}^{1/p}\big)\|f\|_{L^p(w)}.
\end{equation*}
\end{theorem}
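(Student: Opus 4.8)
The plan is to run the two-step scheme that has become standard for sharp weighted bounds: first dominate $T_{\natural}$ by positive \emph{sparse} operators, and then prove the stated inequality for those, the sparse estimate resting on a stopping-time argument in which the two $A_\infty$ characteristics enter \emph{additively}. \emph{Step 1 (reduction to sparse operators).} By Lerner's local mean oscillation formula --- in the form that controls an $L^2$-bounded Calder\'on--Zygmund operator \emph{together with its maximal truncation} by positive dyadic operators --- one has
\begin{equation*}
  \Norm{T_{\natural}f}{L^p(w)}\le C_T\sup_{\mathcal D,\mathcal S}\Norm{\mathcal A_{\mathcal S}|f|}{L^p(w)},
\end{equation*}
the supremum being over all dyadic grids $\mathcal D$ and all \emph{sparse} collections $\mathcal S\subset\mathcal D$ (those admitting pairwise disjoint $E_Q\subset Q$ with $|E_Q|\ge\tfrac12|Q|$), where $\mathcal A_{\mathcal S}h:=\sum_{Q\in\mathcal S}\ave{h}_Q\mathbf 1_Q$ and $\ave{h}_Q:=|Q|^{-1}\int_Q h$. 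It thus suffices to bound $\mathcal A_{\mathcal S}$ on $L^p(w)$, uniformly in $\mathcal S$, by $C_p[w]_{A_p}^{1/p}([w]_{A_\infty}^{1/p'}+[\sigma]_{A_\infty}^{1/p})$ with $\sigma:=w^{1-p'}$; by monotone convergence one may take $\mathcal S$ finite.

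\emph{Step 2 (dualization and the $A_p$ factor).} Since $\sigma^{p-1}w\equiv 1$, replacing $f$ by $f\sigma$ and using $\Norm{f\sigma}{L^p(w)}=\Norm{f}{L^p(\sigma)}$ turns the target into $\Norm{\mathcal A_{\mathcal S}(f\sigma)}{L^p(w)}\lesssim(\cdots)\Norm{f}{L^p(\sigma)}$, which by $L^p(w)$--$L^{p'}(w)$ duality amounts to
\begin{equation*}
  \sum_{Q\in\mathcal S}\sigma(Q)\,\ave{w}_Q\,\ave{f}_Q^{\sigma}\,\ave{g}_Q^{w}\;\lesssim\;[w]_{A_p}^{1/p}\big([w]_{A_\infty}^{1/p'}+[\sigma]_{A_\infty}^{1/p}\big),\qquad\ave{h}_Q^{\mu}:=\tfrac{1}{\mu(Q)}\int_Q h\ud\mu,
\end{equation*}
for all $0\le f,g$ with $\Norm{f}{L^p(\sigma)}=\Norm{g}{L^{p'}(w)}=1$. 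The elementary identity
\begin{equation*}
  \sigma(Q)\,\ave{w}_Q=\big(\ave{w}_Q\ave{\sigma}_Q^{p-1}\big)^{1/p}\sigma(Q)^{1/p}w(Q)^{1/p'}=A_p(w,Q)^{1/p}\,\sigma(Q)^{1/p}w(Q)^{1/p'}\le[w]_{A_p}^{1/p}\,\sigma(Q)^{1/p}w(Q)^{1/p'}
\end{equation*}
then reduces matters to the weight-characteristic-free-looking estimate
\begin{equation*}
  \sum_{Q\in\mathcal S}\sigma(Q)^{1/p}w(Q)^{1/p'}\,\ave{f}_Q^{\sigma}\,\ave{g}_Q^{w}\;\lesssim\;[w]_{A_\infty}^{1/p'}+[\sigma]_{A_\infty}^{1/p}.
\end{equation*}

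\emph{Step 3 (the sparse estimate --- the crux).} Two facts do the work: (i) an $A_\infty$--Carleson lemma --- for any weight $\nu$, sparse $\mathcal S$ and cube $R$, $\sum_{Q\in\mathcal S,\,Q\subseteq R}\nu(Q)\lesssim[\nu]_{A_\infty}\nu(R)$, read off from $A_\infty(\nu,R)=\nu(R)^{-1}\int_R M(\nu\mathbf 1_R)$ via $\nu(Q)\le 2\int_{E_Q}M(\nu\mathbf 1_R)$ and disjointness of the $E_Q$; and (ii) the weighted dyadic Carleson embedding theorem --- if $b_Q\ge 0$ and $\sum_{Q\subseteq R}b_Q\le A\,\nu(R)$ for all $R$, then $\big(\sum_Q b_Q(\ave{h}_Q^{\nu})^q\big)^{1/q}\lesssim_q A^{1/q}\Norm{h}{L^q(\nu)}$. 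To estimate the last sum one builds a stopping-time (principal-cube) skeleton: inside a cube one stops as soon as any of the four averages $\ave{\sigma}$, $\ave{w}$, $\ave{f}^{\sigma}$, $\ave{g}^{w}$ roughly doubles, and decomposes $\mathcal S$ into the resulting coronas. Inside one corona all four averages are comparable to their values at the corona's top $P$, whence --- using sparseness of $\mathcal S$ alone --- the sums $\sum\sigma(Q)$ and $\sum w(Q)$ over the sparse cubes in the corona are $\lesssim\sigma(P)$, resp.\ $\lesssim w(P)$; so the within-corona part of the sum carries \emph{no} weight characteristic. What remains is a sum over corona tops, and there the characteristics enter: tops created by an $\ave{\sigma}$-doubling cost $[\sigma]_{A_\infty}$ (a geometric sum controlled by $\int M(\sigma\mathbf 1_{\cdot})$, i.e.\ by (i)), tops created by an $\ave{w}$-doubling cost $[w]_{A_\infty}$, and tops created by an $\ave{f}^{\sigma}$- or $\ave{g}^{w}$-doubling are free (the $f$- and $g$-principal cubes being $\sigma$- resp.\ $w$-sparse); packaging these with the right exponents via (ii) --- $q=p$ on the $f$-side and $q=p'$ on the $g$-side --- and recombining with the $[w]_{A_p}^{1/p}$ from Step 2 yields exactly $[w]_{A_p}^{1/p}([w]_{A_\infty}^{1/p'}+[\sigma]_{A_\infty}^{1/p})$. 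The two characteristics appear \emph{additively} precisely because the $\ave{\sigma}$- and $\ave{w}$-doubling events are of two distinct kinds and are summed separately.

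\emph{Where the difficulty lies.} Two points need genuine care. First, Step 1 must be carried out for the maximal truncation $T_{\natural}$, not just for $T$: one invokes (or reproves) the version of Lerner's formula valid for $T_{\natural}$, which rests on the Cotlar-type pointwise control of $T_{\natural}$ by $T$ and the Hardy--Littlewood maximal operator together with the weak-type $(1,1)$ bound for $T$. Second --- and this is the real crux --- is the \emph{decoupling} in Step 3: a direct H\"older split of $\ave{f}_Q^{\sigma}\ave{g}_Q^{w}$ followed by two applications of (ii) only delivers the \emph{product} $[w]_{A_\infty}^{1/p'}[\sigma]_{A_\infty}^{1/p}$, and it is exactly the stopping-time bookkeeping above --- making the within-corona sums characteristic-free and charging each stopping event to a single weight --- that upgrades this to the \emph{sum} demanded by the sharp statement.
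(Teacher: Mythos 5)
Your proposal is a correct \emph{modern} route to the theorem, but it is genuinely different from the paper's, and your Step~3 is stated too loosely to be checked as written.

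\textbf{Comparison of routes.} The paper never passes through simple sparse operators: it uses the dyadic representation theorem to write $T$ as an average of Haar shifts $\mathbb S$ of all complexities $\kappa$, then applies Lerner's local mean oscillation formula \emph{to the shift} $\mathbb S_{\natural}$, which produces the ancestor operators $\mathbb{S}^{(i)}f=\sum_{Q}\chi_Q^{(i)}\fint_Q f$ of complexity $i\le\kappa+1$ (not the simple $\mathcal A_{\mathcal S}$). The price is complexity bookkeeping ($\kappa^2$), and the payoff is that the HLMORSU two-weight testing theorem for shifts can be invoked and then drastically simplified for positive kernels, reducing everything to the two Sawyer-type testing constants $\mathfrak S_p,\mathfrak S_p^*$. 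Your Step~1 instead asserts the norm sparse domination of $T_\natural$ by $\mathcal A_{\mathcal S}$ directly; this is true and now standard, but it is a substantially stronger input than the paper uses, and the statement ``Lerner's formula controls $T_\natural$ by $\mathcal A_{\mathcal S}$'' compresses real work (the oscillation of $T_\natural f$ over $Q$ produces a full tail $\sum_k 2^{-k\delta}\fint_{2^kQ}|f|$, and you must fold the shifted averages back into ordinary sparse operators). If you invoke this, fine, but you should cite it rather than call it ``Lerner's formula.''

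\textbf{The real issue: your Step~3 is not yet a proof.} The paper obtains the \emph{additive} $[w]_{A_\infty}^{1/p'}+[\sigma]_{A_\infty}^{1/p}$ by bounding the two testing constants separately: $\mathfrak S_p\lesssim([w,\sigma]_{A_p}[\sigma]_{A_\infty})^{1/p}$ via a single stopping family (principal cubes on the $\sigma$-density) plus a distributional estimate, and $\mathfrak S_p^*\lesssim[w,\sigma]_{A_p}^{1/p}[w]_{A_\infty}^{1/p'}$ by duality; the additivity is then automatic from the testing theorem. Your inlined four-way corona (stop on doubling of $\ave\sigma$, $\ave w$, $\ave f^\sigma$, $\ave g^w$) is plausible but the accounting as you state it does not close. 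Concretely: after the within-corona reduction you face $\sum_P\ave f_P^\sigma\ave g_P^w\sigma(P)^{1/p}w(P)^{1/p'}$ over corona tops $P$, and Hölder turns this into two Carleson embeddings requiring \emph{both} a $\sigma$-Carleson and a $w$-Carleson constant for the \emph{same} stopping family. The $\sigma$-stops are Lebesgue-sparse, hence $\sigma$-Carleson with constant $\lesssim1$ \emph{and} $w$-Carleson with constant $\lesssim[w]_{A_\infty}$; so $\sigma$-stop tops actually cost $[w]_{A_\infty}^{1/p'}$, not $[\sigma]_{A_\infty}$ as you assert (the parity in your sketch is reversed). Worse, the $\ave f^\sigma$-stops are $\sigma$-sparse but \emph{not} Lebesgue-sparse in general, so the $A_\infty$-Carleson lemma does not give their $w$-Carleson constant, and it is not clear that this contribution is ``free'' for the $g$-side embedding. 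A direct bilinear-form corona argument \emph{can} be made to work (this is essentially the content of later work on sparse forms), but it requires a more careful two-sided/parallel-corona construction than you describe. You have correctly identified that the decoupling is the crux; you have not yet supplied the decoupling.
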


We refer the reader to the cited papers for a definition of an `$L^2$ bounded Calder\'on--Zygmund operator, and its maximal truncated version.'  
This result improves and generalizes the following weighted inequalities previously known for arbitrary Calder\'on--Zygmund operators:
\begin{itemize}
  \item The $A_p$ conjecture for untruncated operators, obtained in \cite{Hytonen:A2}:
\begin{equation*}
  \|Tf\|_{L^p(w)}\leq C_{T,p}\big([w]_{A_p}+[w]_{A_p}^{1/(p-1)}\big)\|f\|_{L^p(w)},\qquad p\in(1,\infty).
\end{equation*}  
  \item Its extension to maximal truncations, and to a preliminary form of the mixed $A_p$--$A_\infty$ bounds in \cite{HLMORSU}:
\begin{equation*}
  \|T_{\natural}f\|_{L^p(w)}
  \leq C_{T,p}\big([w]_{A_p}^{1/p}[w]_{A_\infty}^{1/p'}+[w]_{A_p}^{1/(p-1)}\big)\|f\|_{L^p(w)},\qquad p\in(1,\infty).  
\end{equation*}
  \item The case $p=2$ of the Theorem for untruncated operators, from \cite{HytPer}:
\begin{equation*}
    \|T f\|_{L^2(w)}
  \leq C_{T}[w]_{A_2}^{1/2}\big([w]_{A_\infty}^{1/2}+[w^{-1}]_{A_\infty}^{1/2}\big)\|f\|_{L^2(w)}.
\end{equation*}
\end{itemize}

The bound of Theorem~\ref{thm:ApAinfty} was previously proven in \cite{Lacey:ApAinfty} for a restricted class of distinguished Calder\'on--Zygmund operators, including the Hilbert, Riesz and Beurling transforms; the Theorem as stated was also conjectured there.

The present proof consists of elaborating on the themes of the previous papers. As in \cite{Lacey:ApAinfty}, Lerner's local oscillation formula from \cite{Lerner:formula} is used to reduce the estimation of a dyadic shift operator to related positive operators of the form
\begin{equation*}
  f\mapsto\sum_{Q\in\mathscr{Q}}1_Q\fint_{Q^{(i)}}f,
\end{equation*}
where $\mathscr{Q}$ is an appropriately sparse collection of dyadic cubes and $Q^{(i)}$ is the $i$th ancestor of $Q$. In \cite{Lacey:ApAinfty}, these operators were further dominated by simpler positive operators previously studied in \cite{LSU:positive}. The novelty of the present contribution consists of studying the new operators as above directly on their own right. In fact, we observe that these operators are not so new after all; they are just special cases of dyadic shifts of complexity $i$, but with a positive kernel. So we are in a position to apply the results for general shifts from \cite{HLMORSU}, where their weighted $L^p$ estimates were reduced to certain testing conditions. Now, these testing conditions simplify radically in the positive kernel case, and this simplification gives us the additional advantage to verify the sharper mixed bounds as stated.

The method of proof, which goes via the dual-weight formulation, provides the following two-weight generalization, where
\begin{equation*}
  A_p(w,\sigma;Q):=\frac{w(Q)}{|Q|}\Big(\frac{\sigma(Q)}{|Q|}\Big)^{p-1},\qquad [w,\sigma]_{A_p}:=\sup_Q A_p(w,\sigma;Q).
\end{equation*}

\begin{theorem}
Let $T$ be an arbitrary $L^2$ bounded Calder\'on--Zygmund operator, and $T_{\natural}$ its maximal truncated version. Then it satisfies the following bound for all $w,\sigma\in A_\infty$:
\begin{equation*}
  \|T_{\natural}(f\sigma)\|_{L^p(w)}
  \leq C_{T,p}[w,\sigma]_{A_p}^{1/p}\big([w]_{A_\infty}^{1/p'}+[\sigma]_{A_\infty}^{1/p}\big)\|f\|_{L^p(\sigma)}.
\end{equation*}
\end{theorem}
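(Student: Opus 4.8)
The plan is to follow the program outlined in the introduction, reducing the two-weight
estimate for $T_\natural$ to the analogous estimate for positive dyadic shifts of
complexity $i$, and then proving the latter by verifying the simplified testing conditions
from \cite{HLMORSU}. First I would invoke Lerner's local oscillation formula from
\cite{Lerner:formula} to dominate $T_\natural$ (more precisely, its dyadic model obtained
after the usual random dyadic grid / good-bad decomposition reduction to dyadic shifts)
pointwise by a sum over sparse families $\mathscr{Q}$ of operators
$f\mapsto\sum_{Q\in\mathscr{Q}}1_Q\fint_{Q^{(i)}}f$, with the number of shifts and the
value of $i$ controlled polynomially in the complexity parameters of $T$; this reduces
matters to a uniform bound, polynomial in $i$, for each such positive operator acting on
the dual pair $(w,\sigma)$.

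Next I would set up the dual-weight formulation: writing $S_i^{\mathscr{Q}}(f\sigma)=
\sum_{Q\in\mathscr{Q}}1_Q\,\sigma(Q^{(i)})^{-1}|Q^{(i)}|\fint_{Q^{(i)}}(f\sigma)\cdot
\sigma(Q^{(i)})|Q^{(i)}|^{-1}$ and testing $\|S_i^{\mathscr{Q}}(f\sigma)\|_{L^p(w)}$ against
$g\in L^{p'}(w)$, one is led to a bilinear form
$\sum_{Q\in\mathscr{Q}}\langle f\rangle_{Q^{(i)}}^{\sigma}\langle g\rangle_{Q^{(i)}}^{w}
\,w(Q)\wedge(\text{suitable normalization})$ whose boundedness, by the results for general
shifts in \cite{HLMORSU}, follows from two testing inequalities: testing the operator on
$f=1_{Q_0^{(i)}}$ and dually on $g=1_{Q_0^{(i)}}$, over all cubes $Q_0$. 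The key point I
would emphasize is that because the kernel is \emph{positive}, these testing conditions
collapse: the testing sums telescope over the sparse tree and are controlled termwise by
a single Carleson-type estimate. Concretely, the forward testing constant is bounded by
$\sup_{R}\Big(\frac{1}{\sigma(R)}\sum_{Q\in\mathscr{Q},\,Q^{(i)}\subseteq R}
\langle\sigma\rangle_{Q^{(i)}}^{p}\,w(Q)|Q^{(i)}|^{\,?}\Big)^{1/p}$, and using the sparseness
of $\mathscr{Q}$ together with $w\in A_\infty$ one replaces the sum of $w(Q)$ over the tree
by $C[w]_{A_\infty}w(R)$ (this is the standard $A_\infty$ Carleson embedding / sparse
summation bound), after which the remaining average of $\langle\sigma\rangle^p$ against
$[w,\sigma]_{A_p}$ produces exactly the factor $[w,\sigma]_{A_p}^{1/p}[w]_{A_\infty}^{1/p'}$;
the dual testing condition symmetrically yields $[w,\sigma]_{A_p}^{1/p}[\sigma]_{A_\infty}^{1/p}$.

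I expect the main obstacle to be the careful bookkeeping in the reduction step: verifying
that Lerner's formula, combined with the identification of the positive operators as
complexity-$i$ shifts with positive kernel, indeed lets one apply the \cite{HLMORSU}
machinery with constants that are summable (polynomial in $i$) after integrating over the
random grids, so that the final constant depends only on $T$ and $p$. A secondary technical
point is checking that the simplification of the testing conditions in the positive-kernel
case is uniform in the complexity $i$ and does not reintroduce an $A_p$ (rather than
$A_\infty$) dependence through the normalizing factors $|Q^{(i)}|/|Q|$; here the sparseness
of $\mathscr{Q}$ is what keeps these factors harmless. Once these reductions are in place,
the two-weight conclusion is immediate, and the one-weight Theorem~\ref{thm:ApAinfty}
follows by specializing $\sigma=w^{1-p'}$, for which $[w,\sigma]_{A_p}=[w]_{A_p}$.
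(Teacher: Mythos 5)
Your high-level plan — Lerner's formula to reduce to the positive operators $\mathbb{S}^{(i)}f=\sum_{Q\in\mathscr{Q}}1_Q\fint_{Q^{(i)}}f$, view them as complexity-$i$ positive shifts, invoke the \cite{HLMORSU} two-weight testing theorem, and simplify the testing conditions using positivity — is exactly the paper's strategy. However, the crucial step, verifying the testing conditions $\mathfrak{S}_p$ and $\mathfrak{S}_p^*$ with the correct mixed $A_p$--$A_\infty$ dependence, is hand-waved in a way that skips the real difficulty and contains an error.

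First, the error: you claim that the forward testing condition (testing $\mathbb{S}$ on $1_{Q_0}\sigma$) yields $[w,\sigma]_{A_p}^{1/p}[w]_{A_\infty}^{1/p'}$ and the dual one yields $[w,\sigma]_{A_p}^{1/p}[\sigma]_{A_\infty}^{1/p}$; this is backwards. The paper's Proposition on the testing constants gives $\mathfrak{S}_p\lesssim(1+\kappa)\big([w,\sigma]_{A_p}[\sigma]_{A_\infty}\big)^{1/p}$, and by duality $\mathfrak{S}_p^*\lesssim(1+\kappa)[w,\sigma]_{A_p}^{1/p}[w]_{A_\infty}^{1/p'}$. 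The $[\sigma]_{A_\infty}$ factor arises from the Carleson embedding over the $\sigma$-principal cubes, and naturally attaches to the forward condition (whose input is $\sigma$).

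Second, and more substantially: you assert that the forward testing constant is controlled by a single ``standard $A_\infty$ Carleson embedding / sparse summation bound'' because $\mathscr{Q}$ is sparse. This does not work as stated. The testing integral $\int_{Q_0}\big(\sum_{Q\in\mathscr{Q}}1_Q\langle\sigma\rangle_{Q^{(i)}}\big)^p w$ is a $p$-th power of a sum, not a sum of $p$-th powers, and the geometry involves the \emph{ancestors} $Q^{(i)}$, whose collection $\{Q^{(i)}:Q\in\mathscr{Q}\}$ is \emph{not} sparse once $i\geq 1$. The ``$?$'' you left in the exponent of $|Q^{(i)}|$ signals that this normalization problem was not resolved. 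The paper handles precisely this by a three-layer argument you do not account for: (i) splitting into $\kappa+1$ scale-separated subcollections, (ii) pigeonholing cubes by the value of the local $A_p$ ratio $\big(w(Q)/|Q|\big)^{1/p}\big(\sigma(Q)/|Q|\big)^{1/p'}\approx 2^a$, combined with $\sigma$-principal (stopping) cubes, and (iii) an exponential distributional estimate for $\mathbb{S}_{\mathscr{K}^a(P)}(\sigma)$ plus the observation that the stopping-cube densities $\sigma(P)/|P|$ containing a fixed point form a super-exponential sequence, so $\ell^1\approx\ell^p$. Only after all this does the $[\sigma]_{A_\infty}$ Carleson bound $\sum_{P\in\mathscr{P}^a}\sigma(P)\lesssim[\sigma]_{A_\infty}\sigma(Q_0)$ enter, and the geometric sum over $a$ up to $2^a\leq[w,\sigma]_{A_p}^{1/p}$ produces the $[w,\sigma]_{A_p}^{1/p}$ factor. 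None of this is ``immediate'' from positivity; positivity is what lets one replace the three testing constants $\mathfrak{M}_p,\mathfrak{N}_p,\mathfrak{T}_p$ of \cite{HLMORSU} by the two simpler ones $\mathfrak{S}_p,\mathfrak{S}_p^*$, but estimating those two is still the heart of the matter.

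You also omit the $M^{\#}_{1/4}$ term in Lerner's formula (handled by the mixed bound $\|Mf\|_{L^p(w)}\lesssim([w]_{A_p}[w^{1-p'}]_{A_\infty})^{1/p}\|f\|_{L^p(w)}$) and the verification that $\mathbb{S}^{(i)}$ is unweighted $L^2$-bounded with norm $\lesssim i$ — a separate section in the paper, needed because $L^2$-boundedness is not automatic for non-cancellative shifts. The structure you propose is the right one, but the proof is not there yet.
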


\begin{remark}
For the same class of distinguished operators as mentioned above, Lerner \cite{Lerner:ApAr} has recently established a different type of a mixed $A_p$--$A_r$ bound, using the product-type weight constant
\begin{equation*}
  [w]_{A_p^{\alpha}A_r^{\beta}}:=\sup_Q A_p(w,Q)^{\alpha}A_r(w,Q)^{\beta},
\end{equation*}
which is smaller that $[w]_{A_p}^{\alpha}[w]_{A_r}^{\beta}$ in that only one supremum, rather than independent ones over the two factors above, is involved. He proves that
\begin{equation*}
  \|T_{\natural}f\|_{L^p(w)}\leq C_{T,p,r}[w]_{A_p^{1/(p-1)}A_r^{1-1/(p-1)}}\|f\|_{L^p(w)},\qquad 2\leq p<r<\infty,
\end{equation*}
and shows by examples that this is incomparable with our bound. A ultimate conjecture generalizing both results would be the following:
\begin{equation*}
  \|T_{\natural}f\|_{L^p(w)}\leq C_{T,p}\big([w]_{A_p^{1/p}A_\infty^{1/p'}}+[w^{1-p'}]_{A_{p'}^{1/p'}A_\infty^{1/p}}\big)\|f\|_{L^p(w)},\qquad 1<p<\infty.
\end{equation*}

The present proof does not seem to allow product-type weight constants as here; on the technical level, the difficulty is in a pigeonholing construction, which consists of ``freezing'' the local $A_p$ constant
\begin{equation*}
  A_p(w,Q)=\frac{w(Q)}{|Q|}\Big(\frac{w^{1-p'}(Q)}{|Q|}\Big)^{p-1}.
\end{equation*}
Once we also freeze the ratio $w(Q)/|Q|$, it follows that also the ratio $w^{1-p'}(Q)/|Q|$, too, is automatically under control. This would not be the case, had we frozen the product $A_p(w,Q)^{\alpha}A_r(w,Q)^{\beta}$ instead, for then there would still be two independent measure ratios $w^{1-p'}(Q)/|Q|$ and $w^{1-r'}(Q)/|Q|$ (or $\int_Q M(w1_Q)\big/w(Q)$ if $r=\infty$) around.
\end{remark}

\section{Reduction to positive dyadic shifts}

Let $\mathscr{D}$ be the collection of dyadic cubes in $\mathbb{R}^d$. For $Q\in\mathscr{D}$, we write $\ell(Q)=|Q|^{1/d}$ for its side-length, and $Q^{(1)}$ for its parent: the unique dyadic cube such that $\ell(Q^{(1)})=2\ell(Q)$ and $Q^{(1)}\supset Q$. The dyadic ancestors are defined inductively: $Q^{(i)}:=Q^{(i-1)}$.

The following definitions are taken from Section 2 of \cite{HLMORSU}. 
\begin{definition}
Let $Q\in\mathscr{D}$ be a dyadic cube. A \emph{generalized Haar function} associated to $Q$ is a linear combination of the indicator functions of its dyadic children:
\begin{equation*}
h_{Q}=\sum_{\substack{Q'\in\mathscr{D}\\ (Q')^{(1)}=Q}}c_{Q^{\prime }}1_{Q^{\prime }}.
\end{equation*}
We say $h_{Q}$ is a \emph{Haar function} if in addition $\int h_{Q}=0$, that is, a Haar function is
orthogonal to constants on its support. 
\end{definition}

\begin{definition}\label{d.haarShift} For integers $(m,n) \in \mathbb Z _+ ^2 $, we say that  a linear operator $ \mathbb S $ is a \emph{(generalized) Haar shift operator of   complexity type $ (m,n)$} if 
\begin{equation}\label{e.mn}
  \mathbb S  f (x) = \sum_{Q \in \mathcal D}\mathbb{S}_Q f(x)
  = \sum_{Q \in \mathcal D}\quad\sideset {} { ^ {(m,n)}} \sum_{\substack{Q',R'\in \mathcal D\\ Q',R'\subset Q }} 
\frac { \langle f, h ^{Q'} _{R'} \rangle} {\lvert  Q\rvert } k^{R'} _{Q'}(x)
\end{equation}
where
\begin{itemize}
  \item in the second sum, the superscript $ ^{(m,n)}$ on the sum means that in addition we require $ \ell (Q') = 2 ^{-m} \ell (Q)$ and $ \ell (R')= 2 ^{-n} \ell (Q)$,
  \item the function $  h ^{Q'} _{R'}$ is a (generalized) Haar function on $ R'$, and $  k ^{R'} _{Q'}$ is one on $ Q'$, with the normalization that 
\begin{equation} \label{e.normal}
\lVert  h ^{Q'} _{R'}\rVert_{\infty }\leq 1,\qquad \lVert  k ^{R'} _{Q'}\rVert_{\infty } \leq 1 \,,
\end{equation}
  \item  on the unweighted $L^2$ space, the operator satisfies $\|\mathbb{S}f\|_{L^2}\leq\|f\|_{L^2}$.
\end{itemize}
We say that the \emph{complexity} of $ \mathbb S $ is $  \kappa := \max (m,n,1)$.  
\end{definition}

A generalized Haar shift thus has the form%
\begin{equation*}
\mathbb{S}f\left( x\right) =\sum_{Q\in \mathcal{ D}}\frac{1}{\left\vert
Q\right\vert }\int_{Q}s_{Q}\left( x,y\right) f\left( y\right) dy=\int_{%
\mathbb{R}^{n}}K_{\mathbb S}\left(x,y\right) f\left( y\right) dy,
\end{equation*}%
where $s_{Q}$, the kernel of the component $\mathbb{S}_Q$, is supported on $Q\times Q$ and $\left\Vert s_{Q}\right\Vert
_{\infty }\leq 1$. It is easy to check that
\begin{equation*} 
  |K_{\mathbb{S}}(x,y)|\lesssim\frac{1}{|x-y|^d}.
\end{equation*}
The role of \emph{positive} dyadic operators is essential in this note.   

Thanks to the representation of an $L^2$ bounded Calder\'on--Zygmund operator established in \cite{Hytonen:A2}, 
our main Theorem follows from this one.  (For more details on this reduction, see section 2 of \cite{HLMORSU}.) 

\begin{proposition}\label{p.shift}  Let $ \mathbb S $ be a generalized Haar shift operator of complexity $ \kappa $, and $ 1<p< \infty $. 
It holds that 
\begin{equation*}
  \| \mathbb S_{\natural}f\|_{L^p(w)}
  \leq C_{p} \kappa ^2 [w]_{A_p}^{1/p}\big([w]_{A_\infty}^{1/p'}+[w^{1-p'}]_{A_\infty}^{1/p}\big)\|f\|_{L^p(w)}.
\end{equation*}
\end{proposition}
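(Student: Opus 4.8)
The plan is to pass, via Lerner's local mean oscillation formula, from $\mathbb{S}_\natural$ to positive sparse operators, to recognize those as positive–kernel Haar shifts, to invoke the reduction of their two‑weight bounds to testing conditions from \cite{HLMORSU}, and finally to verify those testing conditions with \emph{one} $A_\infty$ characteristic each.

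\emph{Step 1 (reduction to positive sparse shifts).} Following \cite{Lacey:ApAinfty}, which applies the formula of \cite{Lerner:formula} and handles the maximal truncation, one obtains
\begin{equation*}
  \|\mathbb{S}_\natural f\|_{L^p(w)}\lesssim_d\kappa\ \sup_{0\le i\le\kappa}\ \sup_{\mathscr{Q}}\ \Big\|\sum_{Q\in\mathscr{Q}}\Big(\fint_{Q^{(i)}}|f|\Big)1_Q\Big\|_{L^p(w)},
\end{equation*}
where $\mathscr{Q}$ ranges over sparse subfamilies of $\mathscr{D}$ and the factor $\kappa$ reflects the $\kappa$ scales ``seen'' by a complexity‑$\kappa$ shift. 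Set $A^{\mathscr{Q}}_i g:=\sum_{Q\in\mathscr{Q}}(\fint_{Q^{(i)}}g)\,1_Q$. Taking $P=Q^{(i)}$ as the top cube and the indicators $1_P$, $1_{Q}$ as the generalized Haar functions exhibits $A^{\mathscr{Q}}_i$ as a generalized Haar shift of complexity type $(i,0)$ with a \emph{positive} kernel (after an inessential renormalization of its unweighted $L^2$ norm to $1$). Specializing $\sigma=w^{1-p'}$ in a two‑weight bound for $A^{\mathscr{Q}}_i$ yields the one‑weight statement, since $\|A^{\mathscr{Q}}_i f\|_{L^p(w)}=\|A^{\mathscr{Q}}_i((fw^{p'-1})\sigma)\|_{L^p(w)}$ and $\|fw^{p'-1}\|_{L^p(\sigma)}=\|f\|_{L^p(w)}$; two powers of $\kappa$ then appear, one here and one from the complexity in Step 2.

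\emph{Step 2 (testing reduction).} By the two‑weight theorem for Haar shifts of \cite{HLMORSU}, $\|A^{\mathscr{Q}}_i\|_{L^p(\sigma)\to L^p(w)}$ is at most a constant times $i$ times $[w,\sigma]_{A_p}^{1/p}+\mathcal{T}+\mathcal{T}^{\ast}$, where, writing $\langle g\rangle^w_Q:=w(Q)^{-1}\int_Q gw$,
\begin{equation*}
  \mathcal{T}=\sup_{R\in\mathscr{D}}\frac{\|1_R A^{\mathscr{Q}}_i(1_R\sigma)\|_{L^p(w)}}{\sigma(R)^{1/p}},\qquad
  \mathcal{T}^{\ast}=\sup_{R\in\mathscr{D}}\frac{\|1_R (A^{\mathscr{Q}}_i)^{\ast}(1_R w)\|_{L^{p'}(\sigma)}}{w(R)^{1/p'}}.
\end{equation*}
It thus suffices to prove $\mathcal{T}\lesssim_{p}[w,\sigma]_{A_p}^{1/p}[\sigma]_{A_\infty}^{1/p}$: indeed $\mathcal{T}^{\ast}$ is the same quantity for the positive shift $(A^{\mathscr{Q}}_i)^{\ast}$ with the pairs $(w,\sigma)$ and $(p,p')$ interchanged, so the same estimate gives $\mathcal{T}^{\ast}\lesssim_{p}[\sigma,w]_{A_{p'}}^{1/p'}[w]_{A_\infty}^{1/p'}=[w,\sigma]_{A_p}^{1/p}[w]_{A_\infty}^{1/p'}$, using $[\sigma,w]_{A_{p'}}^{1/p'}=[w,\sigma]_{A_p}^{1/p}$; adding, and noting $[w,\sigma]_{A_p}^{1/p}\le[w,\sigma]_{A_p}^{1/p}[\sigma]_{A_\infty}^{1/p}$, one recovers exactly the factor $[w,\sigma]_{A_p}^{1/p}\big([w]_{A_\infty}^{1/p'}+[\sigma]_{A_\infty}^{1/p}\big)$.

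\emph{Step 3 (the testing bound, where positivity pays off).} Positivity makes $1_R A^{\mathscr{Q}}_i(1_R\sigma)$ split cleanly: the contribution of cubes $Q$ with $Q^{(i)}\not\subset R$ is, by a geometric series over the scales of the dyadic ancestors of $R$, at most $\langle\sigma\rangle_R 1_R$, hence contributes $\lesssim A_p(w,\sigma;R)^{1/p}\le[w,\sigma]_{A_p}^{1/p}$ to $\mathcal{T}$. The remaining local part $h_R:=\sum_{Q\in\mathscr{Q},\ Q^{(i)}\subset R}\langle\sigma\rangle_{Q^{(i)}}1_Q$ is a positive sparse sum of the \emph{fixed} function $\sigma$. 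Estimating $\|h_R\|_{L^p(w)}$ by duality against $g\in L^{p'}(w)$, the resulting bilinear form is $\sum_Q \tfrac{\sigma(Q)w(Q)}{|Q|}\langle g\rangle^w_Q$, and the pointwise identity $\tfrac{\sigma(Q)w(Q)}{|Q|}=A_p(w,\sigma;Q)^{1/p}\sigma(Q)^{1/p}w(Q)^{1/p'}$ lets one extract $[w,\sigma]_{A_p}^{1/p}$ immediately, leaving a sparse bilinear form in $\sigma$ and $g$. This form is controlled by a stopping time for $g$ relative to $w$ together with the Carleson embedding theorem; the sole place an $A_\infty$ constant is spent is the upgrade of Lebesgue‑sparseness of $\mathscr{Q}$ to $\sum_{Q\in\mathscr{Q},\ Q\subset S}\sigma(Q)\lesssim[\sigma]_{A_\infty}\sigma(S)$, which is immediate from $\sigma(Q)\le 2|E_Q|\langle\sigma\rangle_Q\le 2\int_{E_Q}M(\sigma1_S)$ and disjointness of the $E_Q$. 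To keep the $w$–side from spending a \emph{second} $A_\infty$ factor — a careless Hölder split produces the larger product $[\sigma]_{A_\infty}^{1/p}[w]_{A_\infty}^{1/p'}$ — one pigeonholes the cubes by the dyadic values of $A_p(w,\sigma;Q)$ \emph{and} of $w(Q)/|Q|$; as explained in the Remark, freezing these also freezes $\sigma(Q)/|Q|$, so on each layer the $w$–weights get absorbed into the already extracted $[w,\sigma]_{A_p}^{1/p}$ and into $\|g\|_{L^{p'}(w)}$ with absolute constants, and the layers recombine geometrically because $A_p(w,\sigma;Q)\le[w,\sigma]_{A_p}$ and by the same Carleson estimate.

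The step I expect to be the true obstacle is this last one: decoupling the two weights — which are linked only through $A_p(w,\sigma;Q)$ — without overspending on $A_\infty$, i.e. getting the \emph{sum} $[w]_{A_\infty}^{1/p'}+[\sigma]_{A_\infty}^{1/p}$ rather than the generally larger product. The freezing of the ratio $w(Q)/|Q|$ (and hence of $\sigma(Q)/|Q|$) is, as far as I can see, exactly what makes this work, and it is also why the present method does not accommodate product‑type weight constants. The Lerner reduction, the identification of $A^{\mathscr{Q}}_i$ as a positive Haar shift, the appeal to the testing theorem, and the tail terms are all routine.
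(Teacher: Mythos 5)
Your proposal follows essentially the same route as the paper: Lerner's local mean oscillation formula to reduce $\mathbb{S}_\natural$ to positive sparse operators of the form $\sum_{Q\in\mathscr{Q}}(\fint_{Q^{(i)}}f)1_Q$; recognition of these as positive-kernel Haar shifts of complexity $i$; reduction to testing conditions via the two-weight theorem of \cite{HLMORSU} (simplified for positive shifts to the two Sawyer-type tests $\mathfrak{S}_p$ and $\mathfrak{S}_p^*$); and verification of the testing conditions with a single $A_\infty$ factor each, plus the maximal function estimate of \cite{HytPer} for the $\mathfrak{M}_p$ piece. The tail/local split of $1_R\mathbb{S}(1_R\sigma)$ and the observation that $\mathfrak{S}_p^*$ for a positive shift is the dual test are exactly as in the paper.

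Where you diverge is Step 3, the actual verification of the local-part testing estimate. The paper does not argue by duality against $g\in L^{p'}(w)$; it keeps the estimate on the $L^p(w)$ side and relies on the distributional inequality $w\big(|\mathbb{S}_{\mathscr{K}^a(P)}(\sigma)|>t\,\sigma(P)/|P|\big)\lesssim e^{-ct}w(P)$ (quoted from \cite{HLMORSU}), together with a principal-cubes stopping time in $\sigma$ (not in $g$) and the super-exponential growth of $\sigma(P)/|P|$ along principal chains (so $\ell^1\approx\ell^p$), and finally the $[\sigma]_{A_\infty}$-Carleson estimate for principal cubes from \cite{HytPer}. The pigeonholing is only over the $A_p$ level $a$ and the scale class $\lambda\bmod(\kappa+1)$; there is no explicit pigeonholing by $w(Q)/|Q|$, though as the paper's Remark explains, the freezing of $A_p(w,\sigma;Q)$ together with the distributional estimate effectively achieves the decoupling you describe. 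Your duality/Carleson-embedding sketch is a plausible alternative in spirit, but the sentence ``on each layer the $w$-weights get absorbed \dots\ and the layers recombine geometrically'' is where the real content lies and is not actually an argument: you would still need to explain how the sum over the $w(Q)/|Q|$-layers converges (there is a priori no bound on the number of such layers within a fixed $A_p$ class), and how the $w$-Carleson embedding for $g$ is obtained on each layer without spending a $[w]_{A_\infty}$. The paper's distributional estimate handles precisely this point, and your sketch leaves a gap there.

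Two smaller inaccuracies worth noting: (i) in your Step 2 the $\kappa\mathfrak{M}_p$ term in the HLMORSU theorem is not simply $i\,[w,\sigma]_{A_p}^{1/p}$; $\mathfrak{M}_p$ is the two-weight maximal constant, which needs its own $A_p$--$A_\infty$ bound $\mathfrak{M}_p\lesssim[w,\sigma]_{A_p}^{1/p}[\sigma]_{A_\infty}^{1/p}$ from \cite{HytPer}; and (ii) the bilinear form arising from $h_R$ should involve $\sigma(Q^{(i)})/|Q^{(i)}|$ paired with $w(Q)$ on two different cubes rather than the same cube $Q$, so the clean identity $\sigma(Q)w(Q)/|Q|=A_p(w,\sigma;Q)^{1/p}\sigma(Q)^{1/p}w(Q)^{1/p'}$ requires the extra monotone inequality $w(Q)\le w(Q^{(i)})$ before it applies. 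Both are easily repaired, but they should be stated.
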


We note that any polynomial dependence on complexity $ \kappa $ is sufficient to conclude our paper.

The application of  Lerner's formula \cite{Lerner:formula} to Haar shift operators 
is well-known, see \cite{CMP-ERAMS}, \cite{CMP}, \cite{Lerner:Ap}, \cite{Lacey:ApAinfty}.  
But all previous applications have given bounds that are exponential in $ \kappa $.  
Lerner's formula  gives the pointwise bound
\begin{equation*}
  |f-m_f(Q_0) |
  \lesssim M^{\#}_{1/4;Q_0}f+\sum_{k,j}\omega_{2^{-d-2}}(f;(Q^{k}_j)^{(1)}) 1_{Q^k_j},
\end{equation*}
where the various quantities are defined as follows:
\begin{gather}
 \label{e.w}
\omega _{\lambda } (\phi  ; Q) := 
\inf _{c\in \mathbb R }  \bigl( (\phi - c )1_{Q}  \bigr) ^{\ast} (\lambda \lvert  Q\rvert ) \,,
\\\label{e.localSharp}
M ^{\sharp} _{\lambda; Q } \phi  (x) 
:= \sup _{Q'\subset Q} 1_{Q'} \omega_{\lambda } (\phi , Q')\,, 
\end{gather}
and $ m_f(Q_0)$ is a median of $ f$ restricted to $ Q_0$, a possibly non-unique real number such that 
\begin{equation*}
\max\bigl\{ \lvert \{ x \in Q_0 \;:\;  f(x)>m_f(Q_0) \}\rvert,\ 
\lvert \{ x \in Q_0 \;:\;  f(x)<m_f(Q_0) \}\rvert \bigr\} \le \tfrac 12 \lvert  Q_0\rvert.  
\end{equation*}
In addition, $ \phi ^{\ast} $ denotes the non-increasing rearrangement, so that if $ \phi $ is supported on $ Q$, 
$ \phi ^{\ast} (\lambda \lvert  Q\rvert )$ is the $ \lambda ^{\textup{th}}$  percentile of $ \phi $. 
Importantly, defining $\Omega_k:=\bigcup_j Q^k_j$  as a disjoint union, it holds that  $\Omega_{k+1}\subset\Omega_k$ and $|Q^k_j\cap\Omega_{k+1}|\leq\tfrac12|Q^k_j|$.

Let $f$ be bounded with bounded support, and $Q_N\to\mathbb{R}^d$. Since $f\in L^2$, we have $\mathbb{S}_{\natural}f\in L^2$, hence $m_{Q_N}(\mathbb{S}_{\natural}f)\to 0$, and finally $1_{Q_N}(\mathbb{S}_{\natural}f-m_{\mathbb{S}_{\natural}f}(Q_N))\to \mathbb{S}_{\natural}f$ pointwise. By Fatou's lemma and Lerner's formula (applied to $Q_N$ in place of $Q_0$), then
\begin{equation*}
\begin{split}
  \int_{\mathbb{R}^d}(\mathbb{S}_{\natural}f)^p w
  &\leq\liminf_{N\to\infty}\int_{Q_N}|\mathbb{S}_{\natural}f-m_{\mathbb{S}_{\natural}f}(Q_N)|^p w \\
  &\lesssim\liminf_{N\to\infty}\Big(\int_{Q_N}(M^{\#}_{1/4;Q_N}f)^p w \\
  &\qquad  +\int_{Q_N}\Big[\sum_{k,j}1_{Q^k_j}\omega_{2^{-d-2}}(\mathbb{S}_{\natural}f;(Q^k_j)^{(1)})\Big]^p w\Big).
\end{split}
\end{equation*}

\begin{lemma}
If $\mathbb{S}$ has complexity $\kappa$, then
\begin{equation*}
  \omega_{\lambda}(\mathbb{S}_{\natural}f;Q)
  :=\inf_c (1_Q(\mathbb{S}_{\natural}f-c))^*(\lambda|Q|)
  \lesssim \kappa \fint_{Q^{(0)}}|f| +  
  \sum_{i=1}^{\kappa}\fint_{Q^{(i)}}|f|.
\end{equation*}
\end{lemma}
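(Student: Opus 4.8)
The plan is to decompose the maximal truncated shift $\mathbb{S}_{\natural}f$ on a cube $Q$ into two pieces according to the interaction of the truncation scales with $\ell(Q)$: a ``local'' part coming from the components $\mathbb{S}_{Q'}$ with $Q'\subset Q$ (or with $Q'$ overlapping $Q$ at comparable scales), and a ``far'' part coming from the components at scales $\ell(Q')\gtrsim\ell(Q)$, for which the kernel is essentially constant across $Q$. For the far part, one uses that $\mathbb{S}_{Q'}g$ has the form $|Q'|^{-1}\int_{Q'}s_{Q'}(\cdot,y)g(y)\ud y$ with $\Norm{s_{Q'}}{\infty}\le 1$; since $Q'$ has $\ell(Q')=2^j\ell(Q)$ for some $j\ge 0$ and $Q'\supset Q^{(j)}$, summing over such $Q'$ and using the complexity bound (so that only $O(\kappa)$ scales contribute a nonconstant oscillation, and the rest telescope) produces a bound by $\kappa\fint_{Q^{(0)}}|f|+\sum_{i=1}^{\kappa}\fint_{Q^{(i)}}|f|$, uniformly in the truncation parameters. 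Crucially this far part is (up to the choice of the additive constant $c$) nearly constant on $Q$, so its contribution to $\omega_\lambda(\mathbb{S}_{\natural}f;Q)$ after subtracting the optimal $c$ is already controlled in $L^\infty(Q)$, hence a fortiori at the $\lambda$th percentile.

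The main work is the local part $\mathbb{S}^{\mathrm{loc}}_{\natural}f$, which is supported (essentially) on a bounded neighborhood of $Q$ at scales $\lesssim\ell(Q)$, and for which we cannot hope for an $L^\infty$ bound — this is exactly where the rearrangement at level $\lambda|Q|$ is needed. Here I would exploit the weak-type $(1,1)$ bound for the maximal truncated Haar shift, which follows from its Calderón--Zygmund kernel bound $|K_{\mathbb{S}}(x,y)|\lesssim|x-y|^{-d}$ together with the assumed unweighted $L^2$ boundedness (Cotlar's inequality giving $\mathbb{S}_{\natural}\lesssim M(\mathbb{S}f)+Mf$, both weak-$(1,1)$, with constants depending polynomially — indeed linearly or better — on $\kappa$). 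Applying this to $f1_{CQ}$, where $CQ$ is the $O(1)$-dilate of $Q$ capturing all local interactions, gives
\begin{equation*}
  \big|\{x\in Q: |\mathbb{S}^{\mathrm{loc}}_{\natural}f(x)|>t\}\big|
  \lesssim \kappa\,\frac{1}{t}\int_{CQ}|f|
  \lesssim \kappa\,\frac{|Q|}{t}\fint_{CQ}|f|,
\end{equation*}
and choosing $t\simeq\kappa\lambda^{-1}\fint_{CQ}|f|$ makes the right side $\le\lambda|Q|$, so that $(1_Q\mathbb{S}^{\mathrm{loc}}_{\natural}f)^*(\lambda|Q|)\lesssim\kappa\lambda^{-1}\fint_{CQ}|f|$. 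Since $CQ$ is covered by $O(1)$ dyadic cubes of the form $Q^{(i)}$ with $0\le i\le\kappa$ (or one can simply dominate $\fint_{CQ}|f|\lesssim\fint_{Q^{(i_0)}}|f|$ for a suitable fixed $i_0=O(1)$), this is of the asserted form. Taking $c$ to be the far-part constant and combining the two estimates through $\omega_\lambda(\mathbb{S}_{\natural}f;Q)\le\Norm{1_Q(\mathbb{S}^{\mathrm{far}}_{\natural}f-c)}{\infty}+(1_Q\mathbb{S}^{\mathrm{loc}}_{\natural}f)^*(\lambda|Q|)$ finishes the proof.

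The step I expect to be the main obstacle is making the ``local/far'' split clean at the level of the \emph{maximal truncation} $\mathbb{S}_{\natural}$ rather than $\mathbb{S}$ itself: the truncation cutoff at a given point $x$ and scale depends on $x$, so one must argue that the contribution of the scales $\ell(Q')\gg\ell(Q)$ is, for \emph{every} admissible truncation simultaneously, within $O(1)$ (after subtracting one $x$-independent constant $c$) of the dyadic averages — this requires a small telescoping/Cotlar-type argument rather than a direct kernel estimate, and is where the honest $\kappa$-dependence has to be tracked to keep it polynomial (here linear). The $\lambda$-dependence of the implied constant is harmless since in the application $\lambda=2^{-d-2}$ is fixed.
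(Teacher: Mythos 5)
Your overall plan matches the paper's: split $1_Q\mathbb{S}_{\natural}f$ according to whether the shift scale is below, comparable to, or above $\ell(Q)$; use that the large-scale contribution is constant on $Q$ (this becomes the subtracted $c$); bound the intermediate scales pointwise by the averages $\fint_{Q^{(i)}}|f|$; and apply the weak-$(1,1)$ inequality for $\mathbb{S}_{\natural}$ to the strictly local part evaluated at the $\lambda$-level of the rearrangement. In this respect the proposal is on the right track.

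However, there is a genuine gap in the way you treat the local part. You apply the weak-$(1,1)$ bound to $f1_{CQ}$ for an $O(1)$-dilate $CQ\supset Q$, and then claim that $\fint_{CQ}|f|\lesssim\fint_{Q^{(i_0)}}|f|$ for a bounded $i_0$. This is false: a dyadic cube $Q$ may sit in the corner of all its ancestors, so $CQ$ with $C>1$ straddles a dyadic boundary and is not contained in any $Q^{(i)}$ with bounded $i$ (e.g.\ $Q=[0,1]$, $CQ=[-1,2]$, $Q^{(i)}=[0,2^i]$). The dilate is also unnecessary here precisely because you are in the dyadic setting: the sub-$Q$ scales of a generalized Haar shift see only $f1_Q$, so the correct local object is $1_Q\mathbb{S}_{\natural}(1_Qf)$, for which the weak-$(1,1)$ bound with constant $\lesssim\kappa$ gives $(\mathbb{S}_{\natural}(1_Qf))^*(\lambda|Q|)\lesssim\kappa\fint_Q|f|$, with no dilate needed. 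Dropping $CQ$ and using $1_Qf$ fixes the argument and reproduces the paper's proof. A second, milder concern is that you invoke the continuous Calder\'on--Zygmund kernel bound plus Cotlar's inequality to get the weak-$(1,1)$ estimate; the kernel size bound $|K_{\mathbb S}(x,y)|\lesssim|x-y|^{-d}$ is indeed $\kappa$-free, but the regularity modulus of $K_{\mathbb S}$ degrades with $\kappa$, and one must be careful that the resulting constant is genuinely polynomial in $\kappa$. The cleaner route, as in the paper, is to use the purely dyadic weak-$(1,1)$ estimate for $\mathbb S_\natural$, which directly gives a constant $\lesssim\kappa$. Finally, describing the part with $\ell(Q)<\ell(Q')\leq 2^{\kappa}\ell(Q)$ as ``nearly constant on $Q$'' conflates two different regimes: those intermediate scales are generally \emph{not} constant on $Q$ (they are only bounded pointwise by $\sum_{i=1}^{\kappa}\fint_{Q^{(i)}}|f|$), while the scales $Q'\supsetneq Q^{(\kappa)}$ are \emph{exactly} constant on $Q$; the paper's three-way split keeps these cleanly apart.
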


\begin{proof}
We have
\begin{equation*}
\begin{split}
  1_Q\mathbb{S}_{\natural}f
  &:=1_Q\sup_{\epsilon<\upsilon}\Big|\sum_{\epsilon\leq\ell(Q')\leq\upsilon}\mathbb{S}_{Q'} f\Big| \\
  &\phantom{:}\leq1_Q\sup_{\epsilon<\upsilon}\Big|\sum_{\substack{\epsilon\leq\ell(Q')\leq\upsilon \\ Q'\subseteq Q}}\mathbb{S}_{Q'} f\Big|
     +1_Q\sum_{Q':Q\subsetneq Q'\subseteq Q^{(\kappa)}}|\mathbb{S}_{Q'} f| \\
  &\qquad   +1_Q\sup_{\epsilon<\upsilon}\Big|\sum_{\substack{\epsilon\leq\ell(Q')\leq\upsilon \\ Q'\supsetneq Q^{(\kappa)}}}\mathbb{S}_{Q'} f\Big|.
\end{split}
\end{equation*}
The last term is a constant (say, $c_Q$) times $1_Q$. Hence
\begin{equation*}
  1_Q|\mathbb{S}_{\natural}f-c_Q|
  \leq 1_Q\mathbb{S}_{\natural}(1_Q f)+1_Q\sum_{i=1}^{\kappa}\fint_{Q^{(i)}}|f|.
\end{equation*}
It follows that
\begin{equation*}
\begin{split}
  \inf_c(1_Q(\mathbb{S}_{\natural}f-c))^*(\lambda|Q|)
  &\leq(1_Q(\mathbb{S}_{\natural}f-c_Q))^*(\lambda|Q|) \\
  &\leq(\mathbb{S}_{\natural}(1_Q f))^*(\lambda|Q|)+\sum_{i=1}^{\kappa}\fint_{Q^{(i)}}|f|,
\end{split}
\end{equation*}
where finally
\begin{equation*}
  (\mathbb{S}_{\natural}(1_Q f))^*(\lambda|Q|)
  \leq\frac{1 }{\lambda|Q|}\|\mathbb{S}_{\natural}(1_Q f)\|_{L^{1,\infty}}
  \lesssim\frac{\kappa }{|Q|}\|1_Q f\|_1=\kappa\fint_Q|f|.
\end{equation*}
\end{proof}

The lemma implies in particular that
\begin{align*}
  M^{\#}_{1/4;Q_N}(\mathbb{S}_{\natural}f)
  \lesssim \sup_{Q\subseteq Q_N}\kappa 1_Q   \fint_{Q}|f|+ 
  \sum_{i=1}^{\kappa}\fint_{Q^{(i)}}|f|
  \leq 2\kappa Mf,
\end{align*}
and hence
\begin{equation*}
  \int_{Q_N}M^{\#}_{1/4;Q_N}(\mathbb{S}_{\natural}f)^p w
  \lesssim\kappa^p\int (Mf)^p w
  \lesssim\kappa^p[w]_{A_p}[w^{1-p'}]_{A_\infty}\int|f|^p w
\end{equation*}
by the mixed bound for the maximal operator from \cite{HytPer}. This bound is of the correct form.

Let us write $\mathscr{Q}:=\{Q^k_j:k,j\in\mathbb{N}\}$. It remains to consider
\begin{equation*}
  \sum_{Q\in\mathscr{Q}} 1_Q\omega_{2^{-d-2}}(\mathbb{S}_{\natural}f;(Q)^{(1)})
  \lesssim\sum_{Q\in\mathscr{Q}}1_Q\Big(\kappa\fint_{Q^{(1)}}|f|+\sum_{i=2}^{\kappa+1}\fint_{Q^{(i)}}|f|\Big),
\end{equation*}
where further, for each $i=1,\ldots,\kappa+1$,
\begin{equation*}
  \sum_{Q\in\mathscr{Q}}1_Q\fint_{Q^{(i)}}|f|
  =\sum_{Q\in\mathscr{D}}\Big(\sum_{\substack{Q'\in\mathscr{Q}\\ (Q')^{(i)}=Q}}1_{Q'}\Big)\fint_Q|f| 
  =:\sum_{Q\in\mathscr{D}}\chi_Q^{(i)}\fint_Q|f| =:\mathbb{S}^{(i)}|f|.
\end{equation*}

Here $\mathbb{S}^{(i)}$ is a \emph{positive} generalized dyadic shift of complexity $i$: each $\mathbb{S}^{(i)}_Q f=\chi_Q ^{(i)}\fint_Q f$ is a positive operator with kernel $|Q|^{-1}\chi^{(i)}_Q\otimes 1_Q$. (We have not yet checked the $L^2$ boundedness, though, which is not automatic in the noncancellative case.) 

It hence suffices to show that
\begin{equation}\label{e.toShow}
  \int (\mathbb{S}^{(i)}f)^p w\lesssim i^p\cdot [w]_{A_p}([w]_{A_\infty}^{p-1}+[w^{1-p'}]_{A_\infty})\int f^p w,
\end{equation}
where $f\geq 0$. This supplies our quadratic in complexity bound in Proposition~\ref{p.shift} above. 
This bound to be proven might look just as bad as what we started with, but the positivity gives us a significant advantage.

\section{A two-weight inequality for positive dyadic shifts}

We recall the two-weight inequality for maximal truncations of general shifts:

\begin{theorem}[\cite{HLMORSU}]  \label{t.HLMORSU}
Let $\mathbb{S}$ be a generalized dyadic shift of complexity $\kappa$. Then
\begin{equation*}
  \|\mathbb{S}_{\natural}(f\sigma)\|_{L^p(w)}
  \lesssim\{\kappa\mathfrak{M}_p+\mathfrak{N}_p+\mathfrak{T}_p\}\|f\|_{L^p(\sigma)},
\end{equation*}
where $\mathfrak{M}_p,\mathfrak{N}_p,\mathfrak{T}_p$ are the constants from the following three estimates, where
\begin{equation*}
  \mathbb{L}f(x):=e^{i\vartheta(x)}\sum_{\eps(x)\leq\ell(Q)\leq\upsilon(x)}\mathbb{S}_Q f(x)
\end{equation*}
ranges over all possible linearizations of $\mathbb{S}_{\natural}$:
\begin{equation*}
\begin{split}
  \|M( &f\sigma) \|_{L^p(w)} \leq\mathfrak{M}_p\|f\|_{L^p(\sigma)}, \qquad
      \|1_Q\mathbb{L}^*(1_Q gw)\|_{L^{p'}(\sigma)}\leq\mathfrak{T}_p w(Q)^{1/p'}\|g\|_{\infty} \\
  &\Big(\int_{Q_0}\sup_{Q\subset Q_0}1_Q\Big[\frac{1}{w(Q)}\int_Q|\mathbb{L}^*(1_Q g w)|\sigma\Big]^p w\Big)^{1/p}
    \leq\mathfrak{N}_p\sigma(Q_0)^{1/p}\|g\|_{\infty}.
\end{split}
\end{equation*}
\end{theorem}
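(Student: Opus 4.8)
The plan is to pass from the maximal truncation to a single bilinear form, run a double corona (stopping-time) decomposition adapted to the two weights, and match the resulting pieces against the three testing constants $\mathfrak{M}_p,\mathfrak{N}_p,\mathfrak{T}_p$. First I would fix an arbitrary linearization $\mathbb{L}$ of $\mathbb{S}_{\natural}$ and seek to bound $\|\mathbb{L}(f\sigma)\|_{L^p(w)}$ by $C\|f\|_{L^p(\sigma)}$ with $C$ \emph{independent} of the choice of $\eps(\cdot),\upsilon(\cdot),\vartheta(\cdot)$. Since each component $\mathbb{S}_Q$ has kernel bounded in modulus by $|Q|^{-1}1_Q\otimes 1_Q$, a routine positivity/monotonicity reduction lets me assume $f,g\geq 0$; by $L^p(w)$--$L^{p'}(w)$ duality the claim becomes
\begin{equation*}
  \Big|\sum_{Q\in\mathscr{D}}\langle\mathbb{S}_Q(f\sigma),g_Q w\rangle\Big|
  \lesssim(\kappa\mathfrak{M}_p+\mathfrak{N}_p+\mathfrak{T}_p)\|f\|_{L^p(\sigma)}\|g\|_{L^{p'}(w)},
\end{equation*}
where $g_Q(x):=e^{i\vartheta(x)}g(x)1_{\{\eps(x)\leq\ell(Q)\leq\upsilon(x)\}}$ absorbs the truncation cut-offs and satisfies $|g_Q|\leq g$ pointwise --- this pointwise bound is exactly what keeps every subsequent estimate uniform over linearizations. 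Expanding $\mathbb{S}_Q$ through its Haar structure expresses each term via the averages $\langle f\rangle^\sigma_{Q'}$ and $\langle g\rangle^w_{R'}$ over the descendants $Q',R'\subset Q$ with $\ell(Q')=2^{-m}\ell(Q)$, $\ell(R')=2^{-n}\ell(Q)$.

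Next I would build two stopping families: the principal cubes $\mathscr{F}$ for $(f,\sigma)$ (a child $F$ of a stopping cube $F_0$ enters $\mathscr{F}$ when it is maximal with $\langle f\rangle^\sigma_F>2\langle f\rangle^\sigma_{F_0}$), and symmetrically the principal cubes $\mathscr{G}$ for $(g,w)$. On the corona of $F$ the average $\langle f\rangle^\sigma$ is comparable to the constant $\alpha_F:=\langle f\rangle^\sigma_F$, the family $\mathscr{F}$ is sparse, and the Carleson embedding theorem gives $\sum_{F\in\mathscr{F}}\alpha_F^p\sigma(F)\lesssim\|f\|_{L^p(\sigma)}^p$ (and symmetrically $\sum_{G\in\mathscr{G}}\beta_G^{p'}w(G)\lesssim\|g\|_{L^{p'}(w)}^{p'}$ with $\beta_G:=\langle g\rangle^w_G$). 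I then split $\sum_{Q\in\mathscr{D}}$ by the pair $(F,G)$ of minimal stopping cubes containing $Q$ and, within that, by whether $F\subseteq G$ or $G\subsetneq F$. For the \emph{interior} part --- where both descendants $Q',R'$ sit strictly inside the coronas of $F$ and of $G$ --- I would replace $\langle f\rangle^\sigma_{Q'}$ and $\langle g\rangle^w_{R'}$ by $\alpha_F$ and $\beta_G$, factor these out, bound the remaining per-corona operator sum by the unweighted $L^2$ estimate on $\mathbb{S}$ (the third bullet of Definition~\ref{d.haarShift}), and finish with the two Carleson sums above; this part needs no testing constant and no growth in $\kappa$.

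The remaining \emph{stopping} terms are those where $Q'$ or $R'$ lies within $\kappa$ generations of a corona boundary, so the averages may jump and the corona constants are unavailable. Here I would recognize the relevant partial sums as localized pairings $\langle\mathbb{L}^*(1_F gw),1_F f\sigma\rangle$: the contribution where the $g$-side is honestly large at a stopping cube is dominated using the dual testing estimate with constant $\mathfrak{T}_p$, while the part organized by the $\mathscr{G}$-stopping tree is precisely what the maximal-type testing estimate with constant $\mathfrak{N}_p$ sums, after bounding the relevant averages by $\sup_{Q\subset Q_0}1_Q[w(Q)^{-1}\int_Q|\mathbb{L}^*(1_Q gw)|\sigma]$ and applying Hölder against $\sum_G\beta_G^{p'}w(G)$. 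Finally, the scalar maximal-operator bound $\mathfrak{M}_p$ absorbs the two genuine losses of a factor $\kappa$: the at most $\kappa$ overlapping generations between the two corona structures that a single cube can straddle, and the truncation parameters $\eps,\upsilon$, which act like an extra maximal truncation. One also runs the whole argument with the roles of $(f,\sigma)$ and $(g,w)$ interchanged for the dual range.

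The main obstacle is this last step: a single $Q$ can straddle up to $\kappa$ stopping generations on each side, so a naive estimate of the off-corona sum would grow like $2^{\kappa}$ rather than linearly in $\kappa$. The fix is to reorganize that sum by the innermost descendant $Q'$ (resp. $R'$) and exploit that the geometric growth of the corona constants along the stopping tree beats the polynomial loss coming from the $\kappa$ intervening scales, thereby reducing the true complexity dependence to the single factor $\kappa\mathfrak{M}_p$. A secondary nuisance, already handled in the first step, is that the three testing constants are quantified over \emph{all} linearizations at once; this causes no trouble because the cut-offs enter only through $g_Q$ with $|g_Q|\leq g$.
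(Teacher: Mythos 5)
First, a point of reference: the paper does not prove Theorem~\ref{t.HLMORSU} at all --- it is quoted from \cite{HLMORSU} as an imported black box --- so there is no in-paper argument to compare yours against. Your sketch does follow the same general architecture as the proof in that reference (linearize $\mathbb{S}_{\natural}$, dualize, run a double corona/stopping decomposition adapted to $(f,\sigma)$ and $(g,w)$, and match the resulting pieces to the three testing constants), so the overall plan is the right one.

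There are, however, genuine gaps in the execution. The most serious is the treatment of the ``interior'' part. You propose to replace the pairings $\langle f\sigma,h^{Q'}_{R'}\rangle$, $\langle gw,k^{R'}_{Q'}\rangle$ by weighted averages, factor out the frozen corona constants $\alpha_F,\beta_G$, and then invoke the unweighted $L^2$ bound of $\mathbb{S}$. This fails twice over. First, majorizing the Haar coefficients by averages discards all cancellation; the positive majorant of a general shift (which ends up summing terms like $1_Q\fint_Q|f|$ over all dyadic $Q$) is not bounded on unweighted $L^2$, so after this step the operator you are left with no longer satisfies the bound you want to invoke --- indeed, the whole point of Section~3 of the present paper is that such brutal positivization is only available for positive shifts. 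Second, and independently, the unweighted $L^2$ estimate is taken with respect to Lebesgue measure and gives no control of an $L^p(\sigma)\to L^p(w)$ quantity; this measure mismatch cannot be repaired by the Carleson sums $\sum_F\alpha_F^p\sigma(F)$ and $\sum_G\beta_G^{p'}w(G)$. In the actual argument the corona-interior pieces are themselves controlled via the testing conditions together with a John--Nirenberg-type exponential distributional estimate inside coronas (of the kind quoted as the Lemma in Section~5 of this paper); they are not ``free.'' Finally, the two points at which the theorem's specific form is actually established --- that the complexity loss is only the single factor $\kappa$ in front of $\mathfrak{M}_p$ rather than $2^{\kappa}$, and that the boundary/paraproduct sums organized by the $\mathscr{G}$-tree are exactly what the unusual maximal-type condition $\mathfrak{N}_p$ controls --- are asserted rather than argued, so the sketch does not yet amount to a proof.
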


We now prove:

\begin{corollary}
Let $\mathbb{S}$ be a positive dyadic shift of complexity $\kappa$. Then
\begin{equation*}
  \|\mathbb{S}(f\sigma)\|_{L^p(w)}
  \lesssim\{\kappa\mathfrak{M}_p+\mathfrak{S}_p+\mathfrak{S}_p^*\}\|f\|_{L^p(\sigma)},
\end{equation*}
where $\mathfrak{M}_p$ is as above, and $\mathfrak{S}_p,\mathfrak{S}_p^*$ are the constants from the following two estimates:
\begin{equation*}
      \|1_Q\mathbb{S}(1_Q \sigma)\|_{L^{p}(w)}\leq\mathfrak{S}_p \sigma(Q)^{1/p},\qquad
      \|1_Q\mathbb{S}^*(1_Q w)\|_{L^{p'}(\sigma)}\leq\mathfrak{S}_p^* w(Q)^{1/p'}.
\end{equation*}
\end{corollary}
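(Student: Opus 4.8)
The plan is to deduce the Corollary from Theorem~\ref{t.HLMORSU} by showing that, for a \emph{positive} shift $\mathbb{S}$, the three testing constants $\mathfrak{M}_p,\mathfrak{N}_p,\mathfrak{T}_p$ appearing there are controlled by the two simpler ones $\mathfrak{M}_p,\mathfrak{S}_p,\mathfrak{S}_p^*$. The constant $\mathfrak{M}_p$ is literally the same in both statements, so nothing is needed there. The bulk of the argument is thus two reductions: $\mathfrak{T}_p\lesssim\mathfrak{S}_p^*$ and $\mathfrak{N}_p\lesssim\mathfrak{S}_p^*$ (possibly with a harmless constant), after first observing that for positive $\mathbb{S}$ the only relevant linearization $\mathbb{L}$ is $\mathbb{S}$ itself up to the unimodular factor $e^{i\vartheta}$ and the truncation window $\eps(x)\le\ell(Q)\le\upsilon(x)$, and that $|\mathbb{L}f|\le\mathbb{S}|f|$ pointwise, uniformly in the linearization, simply because all kernel entries are nonnegative. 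This monotonicity is the single structural fact that makes everything collapse.

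Concretely, I would argue as follows. For the testing constant $\mathfrak{T}_p$: by positivity, $|\mathbb{L}^*(1_Q g w)|\le\mathbb{S}^*(1_Q|g|w)\le\|g\|_\infty\,\mathbb{S}^*(1_Q w)$ pointwise, since $\mathbb{S}^*$ is also a positive operator (its kernel is the transpose, again nonnegative); multiplying by $1_Q$ and taking $L^{p'}(\sigma)$ norms gives $\|1_Q\mathbb{L}^*(1_Q g w)\|_{L^{p'}(\sigma)}\le\|g\|_\infty\|1_Q\mathbb{S}^*(1_Q w)\|_{L^{p'}(\sigma)}\le\mathfrak{S}_p^* w(Q)^{1/p'}\|g\|_\infty$, so $\mathfrak{T}_p\le\mathfrak{S}_p^*$. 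For the constant $\mathfrak{N}_p$: inside the supremum over $Q\subset Q_0$ we have, again by positivity and the pointwise bound, $\frac{1}{w(Q)}\int_Q|\mathbb{L}^*(1_Q g w)|\sigma \le \frac{\|g\|_\infty}{w(Q)}\int_Q \mathbb{S}^*(1_Q w)\,\sigma$. The inner integral is a pairing $\langle \mathbb{S}^*(1_Q w), 1_Q\sigma\rangle = \langle 1_Q w, \mathbb{S}(1_Q\sigma)\rangle$; I want to turn the resulting ``maximal-function-of-testing'' quantity back into the testing quantity $\|1_{Q_0}\mathbb{S}(1_{Q_0}\sigma)\|_{L^p(w)}$. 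Here I would use the standard trick that a supremum over subcubes $Q$ of averages $\frac{1}{w(Q)}\int_Q \Phi\,\mathbf{d}w$ of a fixed nonnegative function $\Phi$ is a dyadic maximal function with respect to the measure $w$, hence bounded on $L^p(w)$ with constant $\lesssim p'$; applied with $\Phi$ comparable to $\mathbb{S}(1_{Q_0}\sigma)/\,$(local normalization), together with the nesting $\mathbb{S}^*(1_Q w)\le\mathbb{S}^*(1_{Q_0}w)$ on $Q\subset Q_0$ and the self-adjoint re-expression, this yields $\mathfrak{N}_p\lesssim\mathfrak{S}_p^* + \mathfrak{S}_p$ or even just $\lesssim\mathfrak{S}_p$ after a dualization. (One has to be a little careful to land the dependence on $Q_0$ on the $\sigma$ side, $\sigma(Q_0)^{1/p}$, rather than the $w$ side; this is exactly where one invokes the definition of $\mathfrak{S}_p$ rather than $\mathfrak{S}_p^*$.)

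The step I expect to be the main obstacle is precisely this last one: correctly matching the $N$-type estimate, whose left-hand side is a \emph{local maximal function of a dual testing expression} measured in $L^p(w)$ with normalizing factor $\sigma(Q_0)^{1/p}$, against the clean one-sided testing constants $\mathfrak{S}_p,\mathfrak{S}_p^*$. The issue is that $\frac{1}{w(Q)}\int_Q \mathbb{S}^*(1_Q w)\sigma$ has the cube $Q$ appearing in three places (the normalization $w(Q)$, the truncation of $w$ inside $\mathbb{S}^*$, and the truncation of $\sigma$), and one must decouple these. The monotonicity $\mathbb{S}^*(1_Q w)\le\mathbb{S}^*(1_{Q_0}w)$ handles one of them, but the interplay of the remaining $w(Q)^{-1}$ and $\int_Q(\cdot)\sigma$ requires either a weighted maximal theorem or an honest stopping-time decomposition of $Q_0$ adapted to $\sigma$; I would first try the weighted maximal function route and only fall back on stopping times if the constants do not cooperate. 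Everything else — the $\mathfrak{M}_p$ term, the $\mathfrak{T}_p$ reduction, and the pointwise positivity bounds — is routine. Once the Corollary is in hand, applying it to the operators $\mathbb{S}^{(i)}$ and invoking the simplification of the testing conditions in the positive case (where $\mathfrak{S}_p,\mathfrak{S}_p^*$ reduce to Sawyer-type conditions verifiable directly from $[w,\sigma]_{A_p}$, $[w]_{A_\infty}$, $[\sigma]_{A_\infty}$) will give the claimed mixed bound \eqref{e.toShow}.
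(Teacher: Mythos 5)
Your proposal follows exactly the paper's route: deduce the Corollary from Theorem~\ref{t.HLMORSU} by showing that, for a positive shift, the pointwise bound $|\mathbb{L}^*g|\le\mathbb{S}^*|g|$ collapses $\mathfrak{T}_p$ to $\mathfrak{S}_p^*$ and $\mathfrak{N}_p$ to $\mathfrak{S}_p$. The $\mathfrak{T}_p\le\mathfrak{S}_p^*$ step is correct and identical to the paper.

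The one place where you hedge — and where you have the ingredients in the wrong order — is the $\mathfrak{N}_p$ reduction. You propose using the monotonicity $\mathbb{S}^*(1_Qw)\le\mathbb{S}^*(1_{Q_0}w)$ \emph{before} the adjoint re-expression, which leaves you with $w(Q)^{-1}\int_Q\mathbb{S}^*(1_{Q_0}w)\,\sigma$; this is genuinely awkward, since the $Q$-dependence now sits in a ratio of a $\sigma$-integral to a $w$-measure, which is not a $w$-average of any fixed function. The paper does the two steps in the opposite order: first the adjoint, $\int_Q\mathbb{S}^*(1_Qw)\sigma=\int_Q\mathbb{S}(1_Q\sigma)w$, and only then monotonicity on the $\sigma$ side, $\le\int_Q\mathbb{S}(1_{Q_0}\sigma)w$. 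At that point $w(Q)^{-1}\int_Q\mathbb{S}(1_{Q_0}\sigma)w$ is the $w$-average over $Q$ of the fixed nonnegative function $1_{Q_0}\mathbb{S}(1_{Q_0}\sigma)$, so the supremum over $Q\subset Q_0$ is $\le M_w(1_{Q_0}\mathbb{S}(1_{Q_0}\sigma))$, and the $L^p(w)$ boundedness of the dyadic $w$-maximal function gives $\mathfrak{N}_p\lesssim\mathfrak{S}_p$ cleanly. No dualization, no stopping times, and no $\mathfrak{S}_p^*$ contamination are needed; the ``three appearances of $Q$'' dissolve immediately once the adjoint is applied first. So the approach is right, but the ordering of the two monotonicity/adjoint steps is what makes the argument actually close.
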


In fact, we check that $\mathfrak{N}_p\lesssim\mathfrak{S}_p$ and $\mathfrak{T}_p\leq\mathfrak{S}_p^*$ for positive shifts. The key point is that when $\mathbb{S}$ is a positive shift, any linearization $\mathbb{L}$ satisfies $|\mathbb{L}^*g|\leq\mathbb{S}^*|g|$ pointwise. From this, it is immediate that
\begin{equation*}
\begin{split}
  \|1_Q\mathbb{L}^*(1_Q gw)\|_{L^{p'}(\sigma)}
  &\leq\|1_Q\mathbb{S}^*(1_Q |g|w)\|_{L^{p'}(\sigma)} \\
  &\leq\|1_Q\mathbb{S}^*(1_Q w)\|_{L^{p'}(\sigma)}\|g\|_{\infty}
  \leq\mathfrak{S}_p^*w(Q)^{1/p'}\|g\|_{\infty},
\end{split}
\end{equation*}
and thus $\mathfrak{T}_p\leq\mathfrak{S}_p^*$.

\begin{lemma}
If $\mathbb{S}$ is a positive shift, then
\begin{equation*}
  \mathfrak{N}_p\lesssim\mathfrak{S}_p.
\end{equation*}
\end{lemma}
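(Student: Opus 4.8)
The goal is to show $\mathfrak{N}_p \lesssim \mathfrak{S}_p$ for a positive shift $\mathbb{S}$, where $\mathfrak{N}_p$ is the constant in the estimate
\begin{equation*}
  \Big(\int_{Q_0}\sup_{Q\subset Q_0}1_Q\Big[\frac{1}{w(Q)}\int_Q|\mathbb{S}^*(1_Q g w)|\sigma\Big]^p w\Big)^{1/p}
    \leq\mathfrak{N}_p\,\sigma(Q_0)^{1/p}\|g\|_{\infty},
\end{equation*}
and $\mathfrak{S}_p$ is the constant in $\|1_Q\mathbb{S}(1_Q \sigma)\|_{L^{p}(w)}\leq\mathfrak{S}_p\,\sigma(Q)^{1/p}$.

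The plan is first to remove the outer supremum by a stopping-time (Carleson) argument. For fixed $Q_0$ and $\|g\|_\infty\le 1$, I would introduce, for each dyadic $Q\subset Q_0$, the ``average over $Q$ against the piece of $\mathbb{S}^*$ localized to $Q$'' quantity $a_Q := \frac{1}{w(Q)}\int_Q \mathbb{S}^*(1_Q g w)\,\sigma$ (using positivity to drop the absolute values and also that $0\le g\le 1$, so one may as well take $g\equiv 1$ and estimate $\frac{1}{w(Q)}\int_Q \mathbb{S}^*(1_Q w)\,\sigma$). Then I would build a sparse/stopping family $\mathscr{F}\subset\mathscr{D}$ of cubes $\subset Q_0$ by selecting, starting from $Q_0$, the maximal descendants $Q'$ where $a_{Q'}$ roughly doubles relative to the average inherited from the nearest ancestor in $\mathscr{F}$; the standard maximality argument gives that the stopping children of any $F\in\mathscr{F}$ have total measure $\le\frac12 w(F)$ (or $\sigma(F)$, whichever is convenient), so $\mathscr{F}$ is Carleson. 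On $E_F := F\setminus\bigcup\{F'\in\mathscr{F}: F'\subsetneq F \text{ maximal}\}$ the pointwise supremum $\sup_{Q\ni x} 1_Q a_Q$ is controlled by $\lesssim a_F$ (plus the contribution of the single selected value), reducing the left side to $\lesssim \big(\sum_{F\in\mathscr{F}} a_F^p\, w(E_F)\big)^{1/p}$.

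Next I would estimate a single term $a_F^p\, w(E_F)$ by duality back through $\mathbb{S}$. Writing $a_F = \frac{1}{w(F)}\int_F \mathbb{S}^*(1_F w)\,\sigma = \frac{1}{w(F)}\int_F \mathbb{S}(1_F \sigma)\,w$ by self-adjointness of the pairing (here $\mathbb{S}^*$ is the adjoint and the kernel is positive), Hölder in the last integral gives $a_F \le \frac{1}{w(F)}\|1_F\mathbb{S}(1_F\sigma)\|_{L^p(w)} w(F)^{1/p'} \le \mathfrak{S}_p\, \sigma(F)^{1/p} w(F)^{-1/p}$. Therefore $a_F^p\, w(E_F) \le \mathfrak{S}_p^p\, \sigma(F)\, \frac{w(E_F)}{w(F)} \le \mathfrak{S}_p^p\,\sigma(F)$, and summing over the Carleson family $\mathscr{F}$ — whose cubes have the property that $\sum_{F'\in\children_{\mathscr F}(F)}\sigma(F')\le\frac12\sigma(F)$ if the stopping is set up with respect to $\sigma$, hence $\sum_{F\in\mathscr F}\sigma(F)\lesssim\sigma(Q_0)$ — yields the bound $\big(\sum_F a_F^p w(E_F)\big)^{1/p}\lesssim \mathfrak{S}_p\,\sigma(Q_0)^{1/p}$, which is exactly $\mathfrak{N}_p\lesssim\mathfrak{S}_p$.

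The main obstacle I anticipate is getting the two stopping conditions to cooperate: the pointwise domination of the supremum on $E_F$ wants the stopping cubes chosen by the growth of $a_Q$, while the summation $\sum_F\sigma(F)\lesssim\sigma(Q_0)$ wants a Carleson packing controlled by $\sigma$. One must check that a single stopping-time construction — e.g. stop when $a_{Q'} > 2 a_F$ \emph{or} when $\sigma(Q')$ has accumulated, whichever comes first — simultaneously gives both the $\lesssim a_F$ domination on $E_F$ and the geometric decay of $\sum\sigma$ over generations; alternatively one verifies that the $a_Q$-driven family is already $\sigma$-Carleson because $a_Q$ itself is (after a John–Nirenberg-type observation) a bounded-oscillation quantity. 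A secondary technical point is justifying $\int_F\mathbb{S}^*(1_F w)\sigma = \int_F\mathbb{S}(1_F\sigma)w$, i.e. that the localized operator is genuinely self-dual with respect to the $\langle\cdot,\cdot\rangle$ pairing for the positive kernel $|Q|^{-1}\chi_Q^{(i)}\otimes 1_Q$ — this is a straightforward Fubini computation given the explicit kernel, but the localizations $1_F$ on both sides must be tracked carefully since $\mathbb{S}$ has complexity $\kappa$ and mixes scales.
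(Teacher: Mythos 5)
Your argument never closes, and the gap is exactly where you flagged it: you need the $a_Q$--driven stopping family $\mathscr F$ to satisfy the $\sigma$-Carleson packing $\sum_{F\in\mathscr F}\sigma(F)\lesssim\sigma(Q_0)$, but the stopping condition is on $a_Q$, not on $\sigma$, and neither of your suggested fixes is substantiated. An ``or''-stopping time that also cuts on $\sigma$ would generically break the pointwise bound $\sup_{Q\ni x}a_Q\lesssim a_F$ on $E_F$ (once a cube is selected for $\sigma$ reasons, the $a$-values along the chain no longer decay geometrically), and there is no John--Nirenberg input available for $a_Q$ that would make the family automatically $\sigma$-Carleson. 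So as written the final summation $\sum_F a_F^p w(E_F)\le\mathfrak S_p^p\sum_F\sigma(F)$ does not finish.

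The missing idea, which the paper uses, is a further application of positivity: after the self-adjointness step you already performed,
\begin{equation*}
  \frac{1}{w(Q)}\int_Q \mathbb{S}^*(1_Q w)\,\sigma=\frac{1}{w(Q)}\int_Q \mathbb{S}(1_Q\sigma)\,w
  \;\le\;\frac{1}{w(Q)}\int_Q \mathbb{S}(1_{Q_0}\sigma)\,w,
\end{equation*}
since $1_Q\sigma\le 1_{Q_0}\sigma$ and $\mathbb S$ is positive. This de-localizes the inner indicator: the right-hand side is the $w$-average over $Q$ of the single fixed function $\mathbb{S}(1_{Q_0}\sigma)$, hence bounded pointwise by $M_w\bigl(1_{Q_0}\mathbb{S}(1_{Q_0}\sigma)\bigr)$. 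The whole stopping-time construction is then replaced by the universal $L^p(w)$-boundedness of the dyadic maximal operator $M_w$, and the testing bound $\|1_{Q_0}\mathbb{S}(1_{Q_0}\sigma)\|_{L^p(w)}\le\mathfrak S_p\sigma(Q_0)^{1/p}$ finishes in one line. Your Fubini/self-adjointness step and the H\"older/testing estimate are fine; the problem is that keeping the inner localization at $Q$ forces you into a principal-cube argument for which you do not have the right Carleson control.
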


\begin{proof}
Let $\|g\|_{\infty}\leq 1$. We have
\begin{equation*}
  \int_Q|\mathbb{L}^*(1_Q g w)|\sigma
  \leq\int_Q \mathbb{S}^*(1_Q w)\sigma
  =\int_Q\mathbb{S}(1_Q\sigma)w
  \leq\int_Q\mathbb{S}(1_{Q_0}\sigma)w,
\end{equation*}
and hence
\begin{equation*}
  1_Q\Big[\frac{1}{w(Q)}\int_Q|\mathbb{L}^*(1_Q g w)|\sigma\Big]^p
  \leq 1_Q\Big[\frac{1}{w(Q)}\int_Q \mathbb{S}(1_{Q_0} \sigma)w \Big]^p
  \leq 1_Q M_w(1_{Q_0}\mathbb{S}(1_{Q_0}\sigma))^p,
\end{equation*}
where $M_w$ is the dyadic maximal operator with respect to the measure $w$. Finally,
\begin{equation*}
  \Big(\int_{Q_0} M_w(1_{Q_0}\mathbb{S}(1_{Q_0}\sigma))^p w\Big)^{1/p}
  \lesssim\Big(\int_{Q_0}\mathbb{S}(1_{Q_0}\sigma)^p w\Big)^{1/p}
  \leq\mathfrak{S}_p\sigma(Q_0)^{1/p}.\qedhere
\end{equation*}
\end{proof}

\begin{remark}
The fact that we deduce a positive-operator result from one for singular operators is somewhat unusual. One could obviously give a direct proof of the Corollary, but this would be only somewhat simpler than the proof of the Theorem~\ref{t.HLMORSU}, and certainly much harder than the above deduction based on the Theorem.
\end{remark}

\section{The unweighted boundedness of the particular shifts $\mathbb{S}^{(i)}$}

We now return to the question of unweighted $L^2$ boundedness of the particular positive shifts
\begin{equation*}
  \mathbb{S}^{(i)}f=\sum_{Q\in\mathscr{Q}}\chi_Q^{(i)}\fint_Q f,\qquad\chi_Q^{(i)}:=\sum_{\substack{Q'\in\mathscr{Q}\\ (Q')^{(i)}=Q}}1_{Q'}
\end{equation*}
arising from the application of Lerner's formula to general shifts.    
This proposition will show that $ c i ^{-1} \mathbb S ^{(i)}$ 
fulfills the requirements of the definition of a generalized Haar shift.

\begin{proposition}
When $\mathscr{Q}=\{Q^k_j:k,j\in\mathbb{N}\}$ are the cubes from Lerner's formula, we have for $p\in(1,\infty)$,
\begin{equation*}
  \|\mathbb{S}^{(i)}f\|_{L^p}\lesssim i\|f\|_{L^p}.
\end{equation*}
\end{proposition}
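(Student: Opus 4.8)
The plan is to exploit the special structure of $\mathscr{Q}$, namely the nested family of sets $\Omega_k=\bigcup_j Q^k_j$ with $\Omega_{k+1}\subset\Omega_k$ and $|Q^k_j\cap\Omega_{k+1}|\le\tfrac12|Q^k_j|$. This property is precisely what makes $\mathscr{Q}$ a \emph{Carleson} (or sparse) family: for each fixed $Q\in\mathscr{Q}$, the sets $E_Q:=Q\setminus\Omega_{k+1}$ (where $Q=Q^k_j$) are pairwise disjoint and satisfy $|E_Q|\ge\tfrac12|Q|$. I would first record this and the dual consequence that $\sum_{Q'\in\mathscr{Q},\,Q'\subseteq Q}|Q'|\lesssim|Q|$.

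Next I would estimate the ``multiplicity'' function $\chi_Q^{(i)}=\sum_{Q'\in\mathscr{Q},\,(Q')^{(i)}=Q}1_{Q'}$. Since the $Q'$ with $(Q')^{(i)}=Q$ all have the same fixed sidelength $2^{-i}\ell(Q)$, they are pairwise disjoint, so in fact $\chi_Q^{(i)}\le 1$ and $\chi_Q^{(i)}=1_{A_Q}$ for some $A_Q\subseteq Q$. Therefore $\mathbb{S}^{(i)}f=\sum_{Q\in\mathscr{D}}1_{A_Q}\fint_Q f$ with $1_{A_Q}\le 1_Q$, which shows $0\le\mathbb{S}^{(i)}f\le \sum_{Q\in\mathscr{D}}1_Q\fint_Q|f|$ is dominated by a sum over \emph{all} dyadic cubes — too lossy. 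The fix: I would reorganize the sum over the generation parameter. Writing $Q'=Q^{k}_j$, its $i$th ancestor $Q=(Q')^{(i)}$ need not lie in $\mathscr{Q}$, but I can split $\mathbb{S}^{(i)}=\sum_{r=0}^{i-1}\mathbb{S}^{(i)}_{[r]}$ according to $k\equiv r\pmod i$ (or a similar decomposition into $O(i)$ pieces), so that within each piece the relevant ancestors $Q$ are themselves organized by a sparse-type nesting inherited from the $\Omega_k$. This is where the factor $i$ in the bound should enter: we pay $i$ for splitting into $i$ subfamilies, each of which gives a bound $\lesssim 1$.

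For each subfamily, the plan is to prove the $L^p$ bound by the standard Carleson-embedding / sparse-operator argument. The operator $f\mapsto\sum_{Q}1_{A_Q}\fint_Q f$ with $A_Q\subseteq Q$ disjoint-in-generation and the $Q$'s forming a Carleson family is bounded on every $L^p$, $1<p<\infty$: test against $g\in L^{p'}$, use $\int(\sum_Q 1_{A_Q}\fint_Q f)g=\sum_Q(\fint_Q f)\int_{A_Q}g\le\sum_Q(\fint_Q|f|)(\fint_Q|g|)|Q|$, and then dominate $\fint_Q|f|\le\inf_Q Mf$ and likewise for $g$, so that $\sum_Q(\ldots)|Q|\lesssim\sum_Q\int_{E_Q}Mf\cdot Mg\le\int Mf\cdot Mg\lesssim\|f\|_p\|g\|_{p'}$ by Hölder and the maximal theorem, using disjointness of the $E_Q$. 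Equivalently one can invoke the Carleson embedding theorem directly. I would carry this out for $\mathbb{S}^{(i)}_{[r]}$ and sum over $r=0,\dots,i-1$.

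The main obstacle is the middle step: verifying that after decomposing the sum into $O(i)$ subfamilies indexed by the residue of $k$ modulo $i$ (or by which of $i$ ``offset lattices'' the ancestor $Q$ sits in), each resulting subfamily of ancestor cubes $\{(Q^k_j)^{(i)}\}$ genuinely satisfies a Carleson packing condition with a constant independent of $i$. The point to check is that if $(Q^k_j)^{(i)}=(Q^{k'}_{j'})^{(i)}=Q$ with $k<k'$ and $Q^{k'}_{j'}\subseteq Q^{k}_j$, then the contribution is controlled because $Q^k_j$ already satisfies $|Q^k_j\cap\Omega_{k+1}|\le\tfrac12|Q^k_j|$; and that cubes $Q^k_j$ with a common $i$th ancestor but incomparable (different $j$) are disjoint, hence their volumes add to at most $|Q|$. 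Handling the interaction between the two indices $k$ and $j$ correctly, and confirming the packing constant does not degrade, is the crux; everything else is the routine sparse-operator estimate above.
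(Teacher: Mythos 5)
Your core idea---sparse/Carleson domination using the nesting of the Lerner sets $\Omega_k$---is the right instinct, and a correct version of it is in fact simpler than the paper's route and even gives the stronger bound $\lesssim 1$. But the way you set it up has a genuine gap.

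After observing $\chi_Q^{(i)}=1_{A_Q}$, you pass to the estimate
$\int(\mathbb{S}^{(i)}f)g\le\sum_{Q}|Q|\big(\fint_Q|f|\big)\big(\fint_Q|g|\big)$,
where $Q$ ranges over the family $\mathscr{A}$ of $i$\textsuperscript{th} ancestors of Lerner cubes. Here you replaced $\int_{A_Q}g$ by $\int_Q|g|$, i.e.\ the weight $|A_Q|$ by the full $|Q|$, and the subsequent sparse-operator argument then needs the Carleson packing condition $\sum_{Q\in\mathscr{A},\,Q\subseteq Q_0}|Q|\lesssim|Q_0|$ with a constant that is $O(1)$ or at worst $O(i)$. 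That is false: since each $Q\in\mathscr{A}$ has a Lerner descendant $Q'\subseteq Q$ with $|Q'|=2^{-di}|Q|$, one only has
$\sum_{Q\in\mathscr{A},\,Q\subseteq Q_0}|Q|\le 2^{di}\sum_{Q'\in\mathscr{Q},\,Q'\subseteq Q_0}|Q'|\lesssim 2^{di}|Q_0|$,
and the order $2^{di}$ is attainable (for instance when $\mathscr{Q}$ consists of one small cube placed inside each dyadic subcube of $Q_0$ down to depth $\approx 2^{di}$; this family is sparse, yet every dyadic ancestor up to that depth belongs to $\mathscr{A}$). Your proposed fix---splitting by the Lerner generation $k\bmod i$---does not repair this: the sidelength of $Q^k_j$ is not a function of $k$, so fixing $k\bmod i$ gives no control on the nesting of the ancestor cubes and hence no improvement of the $2^{di}$ Carleson constant. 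This is precisely the ``crux'' you flagged, and it does not go through.

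The repair is to never pass to the ancestor family at all. Re-index the pairing over the Lerner cubes themselves: for $f,g\ge0$,
\begin{equation*}
\begin{split}
\int(\mathbb{S}^{(i)}f)\,g
&=\sum_{Q'\in\mathscr{Q}}\Big(\fint_{(Q')^{(i)}}f\Big)\int_{Q'}g
\le\sum_{Q'\in\mathscr{Q}}|Q'|\,\inf_{Q'}(M f)\,\inf_{Q'}(M g) \\
&\le 2\sum_{Q'\in\mathscr{Q}}\int_{E_{Q'}}Mf\cdot Mg\le 2\int Mf\cdot Mg\lesssim\|f\|_p\|g\|_{p'},
\end{split}
\end{equation*}
using that $Q'\subseteq(Q')^{(i)}$ forces $\fint_{(Q')^{(i)}}f\le\inf_{Q'}Mf$, and that the sets $E_{Q'}:=Q'\setminus\Omega_{k+1}$ (for $Q'=Q^k_j$) are pairwise disjoint with $|E_{Q'}|\ge\tfrac12|Q'|$. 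This gives $\|\mathbb{S}^{(i)}\|_{L^p\to L^p}\lesssim 1$, and in particular the stated $\lesssim i$, with no decomposition into $i$ subfamilies needed. For comparison, the paper proceeds differently: it checks the testing constants $\mathfrak{S}_p,\mathfrak{S}_p^*\lesssim 1$ via the layering $\sum_{R\subseteq Q}\chi_R=\sum_k 1_{\bigcup\mathscr{Q}_k(Q)}$ with $|\bigcup\mathscr{Q}_k(Q)|\le 2^{-k}|Q|$, and then invokes the two-weight Corollary, which is where the factor $i=\kappa$ enters through the $\kappa\mathfrak{M}_p$ term.
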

  
\begin{proof}
This could be done in a variety of ways.
One possibility is to apply the two-weight result in the case that $w\equiv\sigma\equiv 1$. Then $\mathfrak{M}_p\lesssim 1$ by the usual maximal inequality. Let us drop the superscript $(i)$ for simplicity in the subsequent analysis.
As for $\mathfrak{S}_p$, we have
\begin{equation*}
  1_Q \mathbb{S}(1_Q)
  =\sum_{R\subseteq Q}\chi_R +\sum_{R\supsetneq Q}1_Q\chi_R\frac{|Q|}{|R|}
  \leq \sum_{R\subseteq Q}\chi_R+1.
\end{equation*}
Here,
\begin{equation*}
  \sum_{R\subseteq Q}\chi_R=\sum_{k=0}^{\infty} 1_{\bigcup\mathscr{Q}_k(Q)},
\end{equation*}
where $\mathscr{Q}_0(Q)$ is the collection of the maximal $Q'\in\mathscr{Q}$ with $(Q')^{(i)}\subseteq Q$, and inductively $\mathscr{Q}_{k}(Q)$ is the collection of the maximal $Q'\in\mathscr{Q}$ strictly contained in some $Q''\in\mathscr{Q}_{k-1}(Q)$. From the properties of Lerner's cubes, we have
\begin{equation*}
  \Big|\bigcup\mathscr{Q}_{k}(Q)\Big|\leq 2^{-1}\Big|\bigcup\mathscr{Q}_{k-1}(Q)\Big|\leq\ldots\leq 2^{-k}|\bigcup\mathscr{Q}_{0}(Q)\Big|\leq 2^{-k}|Q|,
\end{equation*}
and hence
\begin{equation*}
    \Big\|\sum_{R\subseteq Q}\chi_R\Big\|_p  \leq \sum_{k=0}^{\infty} \Big|\bigcup\mathscr{Q}_k(Q)\Big|^{1/p}\lesssim|Q|^{1/p}.
\end{equation*}

We turn to $\mathfrak{S}_p^*$. First,
\begin{equation*}
  \mathbb{S}^*g=\sum_{Q\in\mathscr{D}} 1_Q\fint_Q\chi_Q f,
\end{equation*}
and hence
\begin{equation*}
  1_Q\mathbb{S}^*(1_Q)
  =\sum_{R\subseteq Q} 1_R\fint_R\chi_R+\sum_{R\supsetneq Q} 1_Q\frac{1}{|R|}\int_Q\chi_R
  \leq \sum_{R\subseteq Q} 1_R\fint_R\chi_R+1.
\end{equation*}
Estimating the first term by duality with $f\in L^p$, we have
\begin{equation*}
\begin{split}
  \int\Big(\sum_{R\subseteq Q} 1_R\fint_R\chi_R\Big)f
  &=\int\sum_{R\subseteq Q}\chi_R\fint_R f \\
  &\leq\int\Big(\sum_{R\subseteq Q}\chi_R \Big)Mf 
  \lesssim\Big\|\sum_{R\subseteq Q}\chi_R \Big\|_{p'}\|Mf\|_{p}
  \lesssim|Q|^{1/p'}\|f\|_{p}
\end{split}
\end{equation*}
by the previous part of the proof and the maximal theorem.

\end{proof}

\section{The testing constants $\mathfrak{S}_p$ and $\mathfrak{S}_p^*$ in the two-weight case}

In this section we provide an estimate of the testing constants
\begin{equation*}
  \mathfrak{S}_p:=\sup_Q\frac{\|1_Q\mathbb{S}(1_Q\sigma)\|_{L^p(w)}}{\sigma(Q)^{1/p}},\qquad
  \mathfrak{S}_p^* :=\sup_Q\frac{\|1_Q\mathbb{S}^*(1_Q w)\|_{L^{p'}(\sigma)}}{w(Q)^{1/p'}}
\end{equation*}
for an arbitrary (not necessarily positive) dyadic shift $\mathbb{S}$ of complexity $\kappa$. But recall that these are only known to dominate the $L^p$ norm bounds of $\mathbb{S}$ in the special case of positive shifts (or, by different methods, general shifts but only for $p=2$). For the application to or main results, we are ultimately interested in the special case that $\mathbb{S}=\mathbb{S}^{(i)}$.

Let us fix a cube $Q_0$ and the (by now usual) decomposition of its subcubes: Let $\mathscr{K}$ be one of the $\kappa+1$ subcollections of
\begin{equation}\label{e.sepScales}
  \{Q\subseteq Q_0:\log_2\ell(Q)\equiv\lambda\mod\kappa+1\}, 
\end{equation}
and
\begin{equation*}
  \mathscr{K}^a:=\{Q\in\mathscr{K}: 2^a\leq\Big(\frac{w(Q)}{|Q|}\Big)^{1/p}\Big(\frac{\sigma(Q)}{|Q|}\Big)^{1/p'}<2^{a+1}\Big\},
\end{equation*}
where
\begin{equation*}
  2^a\leq [w,\sigma]_{A_p}^{1/p}:=\sup_Q \Big(\frac{w(Q)}{|Q|}\Big)^{1/p}\Big(\frac{\sigma(Q)}{|Q|}\Big)^{1/p'}.
\end{equation*}
Let $\mathscr{P}^a=\bigcup_{n=1}^{\infty}\mathscr{P}^a_n\subseteq\mathscr{K}^a$ be the principal cubes such that $\mathscr{P}^a_0$ consists of all maximal cubes in $\mathscr{K}^a$, and $\mathscr{P}^a_n$ consists of all maximal $P'$ contained in some $P\in\mathscr{P}^a_{k-1}$ with the estimate
\begin{equation*}
  \frac{\sigma(P')}{|P'|}>2\frac{\sigma(P)}{|P|}.
\end{equation*}
For $Q\in\mathscr{K}^a$, let $\Pi(Q)$ be the minimal principal cube containing it, and
\begin{equation*}
  \mathscr{K}^a(P):=\{Q\in\mathscr{K}^a:\Pi(Q)=P\}.
\end{equation*}

For general shifts (and even their maximal truncations), the following distributional estimate is known; see \cite{HLMORSU} Section 10 
in the $A_p$ setting. We remark that the proof is substantially  easier in the non-negative case, the only case we actually need for our main results.

\begin{lemma}[\cite{HLMORSU}]
For any dyadic shift of arbitrary complexity, we have
\begin{equation*}
  w\Big(|\mathbb{S}_{\mathscr{K}^a(P)}(\sigma)|>t\frac{\sigma(P)}{|P|}\Big)\lesssim e^{-ct}w(P),\qquad P\in\mathscr{P}^a,
\end{equation*}
where $c$ is a dimensional constant.
\end{lemma}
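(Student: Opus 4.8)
The plan is to prove this as a John--Nirenberg-type exponential estimate, run with respect to the measure $w$. Write $F:=\mathbb{S}_{\mathscr{K}^a(P)}(\sigma)$ and, after dividing through by the constant $\sigma(P)/|P|$, normalise so that $\sigma(P)/|P|=1$; the assertion then reads $w(|F|>t)\lesssim e^{-ct}w(P)$. The structural gain supplied by the principal-cube construction is precisely that every cube in the tree $\mathscr{K}^a(P)$ has $\sigma$-density comparable to that of $P$: since no principal cube lies strictly between $Q\in\mathscr{K}^a(P)$ and $P$, the stopping rule defining $\mathscr{P}^a$ forbids the $\sigma$-average from more than doubling along the chain from $Q$ up to $P$, so $\sigma(Q)/|Q|\le 2$ for all $Q\in\mathscr{K}^a(P)$. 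This comparability is what makes the iteration below reproduce itself.

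First I would record the elementary pointwise bound on a single block: because $\mathbb{S}_Q$ has an $L^\infty$-normalised kernel supported on $Q\times Q$, we have $\|\mathbb{S}_Q(\sigma)\|_\infty\lesssim \sigma(Q)/|Q|\le 2$ for $Q\in\mathscr{K}^a(P)$; call the resulting uniform bound $C_0$. Next, ordering the blocks of $F$ from larger cubes to smaller ones and forming the partial sums, let $\mathscr{B}$ be the collection of maximal dyadic cubes $Q^{*}\subseteq P$ on which the partial sum built from the blocks $\mathbb{S}_R$ with $R\supseteq Q^{*}$ first exceeds, in absolute value, a large dimensional threshold $t_0$. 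The key quantitative input is that $\bigcup\mathscr{B}$ is $w$-small: from the density comparability above together with the scale separation built into $\mathscr{K}$ (cubes of $\mathscr{K}$ at a fixed scale are pairwise disjoint), the ``upper part'' of $\mathbb{S}_{\mathscr{K}^a(P)}$ obeys a weak-type $(1,1)$ bound relative to $w$, giving $w\bigl(\bigcup\mathscr{B}\bigr)\le t_0^{-1}C_1 w(P)\le\tfrac12 w(P)$ once $t_0$ is chosen large enough.

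Then I would iterate. Off $\bigcup\mathscr{B}$ the function $|F|$ is at most $t_0+C_0$ (the extra $C_0$ accounts for the single block straddling the stopping). On each $Q^{*}\in\mathscr{B}$, the tail $\mathbb{S}_{\{R\in\mathscr{K}^a(P):R\subsetneq Q^{*}\}}(\sigma)$ is again a sum over a principal subtree sitting inside $Q^{*}$ whose blocks have $\sigma$-density $\le 2$, hence an object of exactly the same type, so the previous step applies to it with $w(Q^{*})$ in place of $w(P)$. After $k$ rounds, $|F|\le k(t_0+C_0)+C_0$ outside a set of $w$-measure at most $2^{-k}w(P)$; optimising in $k$ for a given level $t$ yields $w(|F|>t)\lesssim e^{-ct}w(P)$ with $c$ dimensional. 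For the maximal-truncation version one runs the same scheme for an arbitrary linearisation of $\mathbb{S}_{\natural}$, using the pointwise domination of any linearisation by $\mathbb{S}(|\cdot|)$ already noted in the excerpt, so that the block bound $C_0$ and the stopping construction are unaffected.

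The step I expect to be the main obstacle is establishing the $w$-weak-type bound that makes $\bigcup\mathscr{B}$ small --- essentially, controlling uniformly in the weights how many blocks of $\mathscr{K}^a(P)$ can genuinely overlap at a point, which forces one to combine the geometry of $\mathscr{K}$ (scales separated modulo $\kappa+1$) with that of the principal cubes. This is also exactly the point where the argument simplifies drastically in the non-negative case we actually need: there $\mathbb{S}_{\mathscr{K}^a(P)}$ is monotone, every partial sum on a stopping cube is comparable to the full sum, and cubes of small $\sigma$-density contribute negligibly, so the weak-type estimate reduces to a direct Carleson packing computation; the remaining John--Nirenberg iteration is then routine.
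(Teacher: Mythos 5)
The paper does not prove this lemma; it only cites it from \cite{HLMORSU} (``see \cite{HLMORSU} Section 10''), so there is no in-paper argument to compare against. Judged on its own terms, your plan has the right overall shape (a good-$\lambda$/John--Nirenberg iteration with stopping cubes based on partial sums that are locally constant thanks to the scale separation mod $\kappa+1$), and your observation that the stopping rule forces $\sigma(Q)/|Q|\le 2\,\sigma(P)/|P|$ for all $Q\in\mathscr K^a(P)$ is correct and essential. But the step you flag as ``the main obstacle'' is, in my view, a genuine gap rather than a routine verification, and your sketch of it does not hold up.

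The problem is the assertion that the maximal partial sums of $\mathbb S_{\mathscr K^a(P)}(\sigma)$ satisfy a weak-type $(1,1)$ estimate \emph{with respect to $w$}, giving $w(\bigcup\mathscr B)\le t_0^{-1}C_1\,w(P)$. There is no visible source for such a $w$-weak-type bound. The CZ weak $(1,1)$ for $\mathbb S_\natural$ is a Lebesgue-measure statement, and transferring $|\bigcup\mathscr B|\le\tfrac12|P|$ to $w(\bigcup\mathscr B)\le\tfrac12 w(P)$ would require an $A_\infty$-type input, which the lemma does not allow (its constant is purely dimensional, independent of $w$ and $\sigma$). A Chebyshev argument $w(\bigcup\mathscr B)\le t_0^{-1}\int|F|\,w$ would need the $w$-Carleson packing bound
\begin{equation*}
\sum_{Q\in\mathscr K^a(P)}\Big(\fint_Q\sigma\Big)\,w(Q)\ \lesssim\ \Big(\fint_P\sigma\Big)\,w(P),
\end{equation*}
and this is \emph{not} implied by the structure you invoke: taking $\sigma=w=$ Lebesgue and $a=0$ makes every dyadic subcube of $P$ at the admissible scales belong to $\mathscr K^0(P)$ with no new principal cubes, and then the left side diverges. (What rules out the pathological shift in that example is the required $L^2$ boundedness of $\mathbb S$, not any packing of the index set, so that boundedness must enter the argument in a way your sketch does not capture.) Finally, the density information you extract from freezing $a$ and the principal-cube stopping goes the wrong way for your purposes: $\sigma(Q)/|Q|\le 2\,\sigma(P)/|P|$ together with $A_p(w,\sigma;Q)\sim A_p(w,\sigma;P)$ yields a \emph{lower} bound $w(Q)/|Q|\gtrsim w(P)/|P|$ on the $w$-density of cubes in $\mathscr K^a(P)$, which cannot be used to show $w(\bigcup\mathscr B)$ is small relative to $w(P)$; for that you would need an upper bound on $w$-density, i.e.\ a lower bound on $\sigma$-density, which the stopping rule does not provide. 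The ``Carleson packing computation'' you gesture at for the non-negative case runs into exactly the same obstruction: cubes of small $\sigma$-density contribute little to $F$ individually, but they have large $w$-density, and there may be arbitrarily many of them, so the $w$-mass of the stopping set is not controlled by $w(P)$ through that route. Until this transfer-to-$w$ step is supplied with a real argument --- which is precisely what \cite{HLMORSU} Section 10 does and why the paper chooses to cite it rather than reprove it --- the proposal is incomplete.
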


The following estimate completes the verification of the testing conditions. 


\begin{proposition}
Let $\mathbb{S}$ be a dyadic shift of complexity $\kappa$. Then
\begin{equation*}
  \|1_Q\mathbb{S}(1_Q\sigma)\|_{L^p(w)}\lesssim(1+\kappa)\big([w,\sigma]_{A_p}[\sigma]_{A_\infty}\sigma(Q)\big)^{1/p}
\end{equation*}
\end{proposition}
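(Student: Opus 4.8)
The plan is to estimate $\|1_{Q_0}\mathbb{S}(1_{Q_0}\sigma)\|_{L^p(w)}$ by splitting into the $\kappa+1$ subcollections $\mathscr{K}$ of cubes with side-lengths in a fixed residue class mod $\kappa+1$ (this gives the factor $1+\kappa$), and then, within each $\mathscr{K}$, splitting further into the layers $\mathscr{K}^a$ according to the dyadic size of the local quantity $(w(Q)/|Q|)^{1/p}(\sigma(Q)/|Q|)^{1/p'}\in[2^a,2^{a+1})$. So it suffices to control, for each fixed $\mathscr{K}^a$ (with $2^a\leq[w,\sigma]_{A_p}^{1/p}$), the contribution $\|1_{Q_0}\mathbb{S}_{\mathscr{K}^a}(1_{Q_0}\sigma)\|_{L^p(w)}$ and then sum over $a$. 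Here $\mathbb{S}_{\mathscr{K}^a}$ denotes the part of the shift coming from the cubes $Q\in\mathscr{K}^a$.

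The core of the argument is to organize $\mathscr{K}^a$ by its principal cubes $\mathscr{P}^a$, defined by the stopping condition $\sigma(P')/|P'|>2\sigma(P)/|P|$. The stopping construction is Carleson: since each generation strictly increases the $\sigma$-density by a factor $2$, and the density on a principal cube is at most $\sim 2^a (|P|/w(P))^{1/(p-1)}\cdot\ldots$ — more precisely, the relevant Carleson packing inequality $\sum_{P'\in\mathscr{P}^a,\,P'\subseteq P}\sigma(P')\lesssim[\sigma]_{A_\infty}\sigma(P)$ holds by the standard $A_\infty$ stopping-time estimate (increasing $\sigma$-averages along a tree are summable with constant $[\sigma]_{A_\infty}$). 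For each principal cube $P$ one writes $\mathbb{S}_{\mathscr{K}^a(P)}(1_{Q_0}\sigma)=\mathbb{S}_{\mathscr{K}^a(P)}(\sigma)$ on its support and invokes the quoted distributional (exponential) Lemma: $w(|\mathbb{S}_{\mathscr{K}^a(P)}(\sigma)|>t\,\sigma(P)/|P|)\lesssim e^{-ct}w(P)$. Integrating $t^{p-1}e^{-ct}$ gives $\|\mathbb{S}_{\mathscr{K}^a(P)}(\sigma)\|_{L^p(w)}^p\lesssim (\sigma(P)/|P|)^p w(P)$.

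Next I would glue the pieces. Because the supports $\{Q\in\mathscr{K}^a(P)\}$ overlap only in a bounded-overlap fashion across distinct $P$ (the scale separation mod $\kappa+1$ makes the shift outputs on different principal towers essentially disjointly supported in the relevant sense), one has $\|1_{Q_0}\mathbb{S}_{\mathscr{K}^a}(1_{Q_0}\sigma)\|_{L^p(w)}^p\lesssim\sum_{P\in\mathscr{P}^a}(\sigma(P)/|P|)^p w(P)$. Now use the $\mathscr{K}^a$ membership: $(\sigma(P)/|P|)^p w(P)=(\sigma(P)/|P|)^{p-1}\cdot(\sigma(P)/|P|)(w(P)/|P|)\cdot|P|\le 2^{(a+1)p}(\sigma(P)/|P|)^{p-1}\cdot|P|^{?}$ — the cleaner route is $(w(P)/|P|)(\sigma(P)/|P|)^{p-1}\leq[w,\sigma]_{A_p}$, hence $(\sigma(P)/|P|)^p w(P)=[(w(P)/|P|)(\sigma(P)/|P|)^{p-1}]\cdot(\sigma(P)/|P|)\cdot|P|\leq[w,\sigma]_{A_p}\,\sigma(P)$. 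Summing over $P\in\mathscr{P}^a$ and using the Carleson packing $\sum_{P\in\mathscr{P}^a}\sigma(P)\lesssim[\sigma]_{A_\infty}\sigma(Q_0)$ yields $\|1_{Q_0}\mathbb{S}_{\mathscr{K}^a}(1_{Q_0}\sigma)\|_{L^p(w)}^p\lesssim[w,\sigma]_{A_p}[\sigma]_{A_\infty}\sigma(Q_0)$, uniformly in $a$. Finally sum over $a$: here one needs a gain in $a$ to make the series converge, which comes from a more careful bookkeeping — either the geometric decay of $w(Q)/|Q|$ forced on the maximal cubes of $\mathscr{K}^a$ (those have bounded $\sigma$-density relative to the ambient one, so the number of relevant scales $a$ contributing nontrivially to a fixed point is controlled), or by absorbing the $a$-sum into the $\mathscr{K}^a$-layer count via an $\ell^p$-vs-$\ell^\infty$ trick. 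Combining the $a$-sum with the initial factor $(\kappa+1)$ from the $\mathscr{K}$-splitting gives the claimed bound $(1+\kappa)([w,\sigma]_{A_p}[\sigma]_{A_\infty}\sigma(Q))^{1/p}$.

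\textbf{Main obstacle.} The delicate point is the summation over the exponent $a$: the per-layer estimate is uniform in $a$, so some genuine orthogonality or decay across layers is required. I expect this is handled by exploiting that on the maximal cubes of $\mathscr{K}^a$ the complementary density $w(Q)/|Q|$ is pinned down (once $\sigma(Q)/|Q|$ and the product $2^a$ are both fixed), so that a given cube $Q_0$ meets only $O(1)$ — or at worst logarithmically many — essentially distinct layers in a way that telescopes; this is exactly the ``freezing'' pigeonholing flagged in the introductory remark. The rest — the stopping-time/Carleson estimate and the integration of the exponential tail — is routine.
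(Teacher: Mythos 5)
Your overall plan (scale separation mod $\kappa+1$, dyadic layers $\mathscr{K}^a$ in the local $A_p$-characteristic, principal cubes in $\sigma$-density, the exponential distributional lemma, $A_\infty$ Carleson packing for $\sum_P \sigma(P)$) matches the paper's proof. But there is a genuine gap, and it is precisely the point you flag as the ``main obstacle'': the summation over $a$. Your per-layer estimate uses $(w(P)/|P|)(\sigma(P)/|P|)^{p-1}\leq[w,\sigma]_{A_p}$, which is uniform in $a$ and therefore not summable --- there are infinitely many negative $a$ to sum over, and even restricting to a finite range would cost a logarithmic factor. The resolution is to use the defining property of $\mathscr{K}^a$ more sharply: for $P\in\mathscr{K}^a$ one has $(w(P)/|P|)^{1/p}(\sigma(P)/|P|)^{1/p'}<2^{a+1}$, hence $(\sigma(P)/|P|)^p w(P)= A_p(w,\sigma;P)\,\sigma(P)\lesssim 2^{ap}\sigma(P)$. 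This puts the explicit factor $2^a$ into the per-layer $L^p(w)$ bound, so the $a$-sum $\sum_{a:\,2^a\leq[w,\sigma]_{A_p}^{1/p}}2^a$ is geometric and is itself $\lesssim[w,\sigma]_{A_p}^{1/p}$. None of the alternative mechanisms you speculate about (bounded number of relevant layers, $\ell^p$-vs-$\ell^\infty$ trick) is what is actually used, nor would they be straightforward to make rigorous.

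A secondary issue: your ``gluing'' step, claiming that the $\mathbb{S}_{\mathscr{K}^a(P)}(\sigma)$ are essentially disjointly supported across principal cubes, is not correct as stated --- for nested principal cubes the supports genuinely overlap. The paper instead passes the $p$-th power inside the sum over $P$ by observing that, for a fixed point $x$, the densities $\sigma(P)/|P|$ over principal cubes $P\owns x$ form a super-exponentially increasing sequence (each step at least doubles, by the stopping rule), so their $\ell^1$ and $\ell^p$ norms are comparable. Combined with a triangle inequality in the level-set parameter $j$ of the exponential decay, this replaces your bounded-overlap claim. You should also record the trivial contribution from cubes $R\supsetneq Q$, which the paper bounds directly by $[w,\sigma]_{A_p}^{1/p}\sigma(Q)^{1/p}$; your reduction to $\mathbb{S}_{\mathscr{K}^a(P)}(\sigma)$ handles only the $R\subseteq Q$ part.
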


\begin{proof}
We write
\begin{equation*}
  1_Q\mathbb{S}(1_Q\sigma)
  =\sum_{R\subseteq Q}\mathbb{S}_R(\sigma)+1_Q\sum_{R\supsetneq Q}\mathbb{S}_R(1_Q\sigma),
\end{equation*}
where the second term satisfies
\begin{equation*}
  \Big|1_Q\sum_{R\supsetneq Q}\mathbb{S}_R(1_Q\sigma)\Big|
  \leq 1_Q\sum_{R\supsetneq Q}\frac{\sigma(Q)}{|R|}\leq 1_Q\frac{\sigma(Q)}{|Q|}
\end{equation*}
and
\begin{equation*}
  \Big\|1_Q\frac{\sigma(Q)}{|Q|}\Big\|_p =w(Q)^{1/p}\frac{\sigma(Q)}{|Q|}\leq[w,\sigma]_{A_p}^{1/p}\sigma(Q)^{1/p}.
\end{equation*}
The first term we write as
\begin{equation*}
  \sum_{R\subseteq Q}\mathbb{S}_R(\sigma)
  =\sum_{\lambda=0}^{\kappa}\sum_{a:2^a\leq[w,\sigma]_{A_p}^{1/p}}\sum_{P\in\mathscr{P}^a}\mathbb{S}_{\mathscr{K}^a(P)}(\sigma),
\end{equation*}
where the dependence of the quantities on the parameter $\lambda$ from \eqref{e.sepScales} is suppressed.  For $P\in\mathscr{P}^a$, define the set 
\begin{equation*}
  P_j^a:=\Big\{|\mathbb{S}_{\mathscr{K}^a(P)}(\sigma)|\in \frac{\sigma(P)}{|P|}(j,j+1]\Big\}\subseteq P.
\end{equation*}
By the distributional estimate, we have $w(P_j^a)\lesssim e^{-cj}w(P)$. Then
\begin{equation*}
  \Big\|\sum_{R\subseteq Q}\mathbb{S}_R(\sigma)\Big\|_{L^p(w)}
  \leq\sum_{\lambda=0}^{\kappa}\sum_{a}\sum_{j=0}^{\infty}(j+1)
     \Big\|\sum_{P\in\mathscr{P}^a}1_{P^a_j}\cdot\frac{\sigma(P)}{|P|}\Big\|_{L^p(w)},
\end{equation*}
and
\begin{equation*}
\begin{split}
  \Big\|&\sum_{P\in\mathscr{P}^a} 1_{P^a_j}\cdot\frac{\sigma(P)}{|P|}\Big\|_{L^p(w)}
  =\Big(\int\Big[\sum_{P\in\mathscr{P}^a}1_{P^a_j}(x)\cdot\frac{\sigma(P)}{|P|}\Big]^p w(dx)\Big)^{1/p} \\
  &\overset{(*)}{\lesssim} \Big(\int \sum_{P\in\mathscr{P}^a}1_{P^a_j}(x)\cdot\Big[\frac{\sigma(P)}{|P|}\Big]^p w(dx)\Big)^{1/p} 
    =\Big(\int \sum_{P\in\mathscr{P}^a}w(P^a_j)\cdot\Big[\frac{\sigma(P)}{|P|}\Big]^p\Big)^{1/p} \\
  &\lesssim \Big(\int \sum_{P\in\mathscr{P}^a}e^{-cj}w(P)\cdot\Big[\frac{\sigma(P)}{|P|}\Big]^p\Big)^{1/p} 
  \leq 2^a\Big(e^{-cj}\sum_{P\in\mathscr{P}^a}\sigma(P)\Big)^{1/p} \\
  &\overset{(**)}{\lesssim} 2^a\Big(e^{-cj}[\sigma]_{A_\infty}\sigma(Q)\Big)^{1/p},
\end{split}
\end{equation*}
where we in $(*)$ the fact that the numbers $\frac{\sigma(P)}{|P|}$, for $P\owns x$ with a fixed $x$, form a super-exponential sequence (so that their $\ell^1$ and $\ell^p$ norms a comparable) and in $(**)$ an estimate for the principal cubes in terms of $[\sigma]_{A_\infty}$ from \cite{HytPer}.

Now we can simply sum up
\begin{equation*}
\begin{split}
  \Big\|\sum_{R\subseteq Q}\mathbb{S}_R(\sigma)\Big\|_{L^p(w)}
  &\leq\sum_{\lambda=0}^{\kappa}\sum_{a:2^a\leq[w,\sigma]_{A_p}^{1/p}}\sum_{j=0}^{\infty}(j+1)
    \Big\|\sum_{P\in\mathscr{P}_a} 1_{P^a_j}\cdot\frac{\sigma(P)}{|P|}\Big\|_{L^p(w)} \\
  &\lesssim\sum_{\lambda=0}^{\kappa}\Big(\sum_{a:2^a\leq[w,\sigma]_{A_p}^{1/p}}2^a\Big)\Big(\sum_{j=0}^{\infty}(j+1)2^a e^{-cj/p}\Big)[\sigma]_{A_\infty}^{1/p}\sigma(Q)^{1/p} \\
  &\lesssim(\kappa+1)[w,\sigma]_{A_p}^{1/p}[\sigma]_{A_\infty}^{1/p}\sigma(Q)^{1/p}.\qedhere
\end{split}
\end{equation*}
\end{proof}

\bibliography{weighted}
\bibliographystyle{plain}

\end{document}